%

\documentclass[aos,MSNbibl,seceqn,citesort,dvips]{arximspdf}

%

\doi{10.1214/10-AOS869}
\volume{39}
\issue{3}
\pubyear{2011}
\firstpage{1551}
\lastpage{1579}

\makeatletter

\newtheorem{cor}{Corollary}[section]
\newtheorem{lem}{Lemma}[section]
\newtheorem{thr}{Theorem}[section]

\newproclaim{rem}{Remark}[section]
\newproclaim{Assumption}{Assumption}
\newproclaim{definition}{Definition}

\makeatother

\begin{document}
\begin{frontmatter}

\title{Asymptotic Bayes-optimality under sparsity of some multiple testing procedures}
\runtitle{ABOS of multiple testing procedures}

\begin{aug}
\author[A,D]{\fnms{Ma{\l}gorzata}~\snm{Bogdan}\corref{}\thanksref{t1}\ead[label=e1]{Malgorzata.Bogdan@pwr.wroc.pl}\ead[label=e6]{bogdanm@stat.purdue.edu}},
\author[B]{\fnms{Arijit}~\snm{Chakrabarti}\ead[label=e3]{arc@isical.ac.in}},
\author[C]{\fnms{Florian}~\snm{Frommlet}\thanksref{t2}\ead[label=e4]{Florian.Frommlet@univie.ac.at}}
and
\author[D,B]{\fnms{Jayanta~K.}~\snm{Ghosh}\ead[label=e2]{ghosh@purdue.edu}\ead[label=e5]{jayanta@isical.ac.in}}
\runauthor{Bogdan, Chakrabarti, Frommlet and Ghosh}
\affiliation{Wroc{\l}aw University of Technology and Purdue University,
Indian Statistical Institute, University of Vienna, and Purdue
University and~Indian~Statistical~Institute}
\address[A]{M. Bogdan\\
Institute of Mathematics\\
\quad and Computer Science\\
Wroc{\l}aw University of Technology\\
Ul. Janiszewskiego 14a\\
50-370 Wroclaw\\
Poland\\
\printead{e1}}
\address[B]{A. Chakrabarti\\
J. K. Ghosh\\
Indian Statistical Institute\\
203 B.T. Road\\
Kolkata 700108, West Bengal\hspace*{4pt}\\
India\\
\printead{e3}\\
\hphantom{E-mail: }\printead*{e5}}
\address[C]{F. Frommlet\\
Department of Statistics\\
\quad and Decision Support Systems\\
University of Vienna \\
Br\"{u}nnerstra{\ss}e 72\\
1210 Vienna\\
Austria\\
\printead{e4}}
\address[D]{J. K. Ghosh\\
M. Bogdan\\
Department of Statistics\\
Purdue University \\
150 North University Street\\
West Lafayette, Indiana 47907\\
USA\\
\printead{e2}\\
\hphantom{E-mail: }\printead*{e6}}
\end{aug}

\thankstext{t1}{Supported in part by Grants 1 P03A 01430 and NN 201414139 of the Polish
Ministry of Science and Higher Education.}

\thankstext{t2}{Supported in part by the WWTF Grant MA09-007.}

\received{\smonth{2} \syear{2010}}
\revised{\smonth{12} \syear{2010}}

%
\begin{abstract}
Within a Bayesian decision theoretic framework we investigate some
asymptotic optimality properties of a large class of multiple testing
rules. A parametric setup is considered, in which observations come
from a normal scale mixture model and the total loss is assumed to be
the sum of losses for individual tests. Our model can be used for
testing point null hypotheses, as well as to distinguish large signals
from a multitude of very small effects. A~rule is defined to be
asymptotically Bayes optimal under sparsity (ABOS), if within our
chosen asymptotic framework the ratio of its Bayes risk and that of the
Bayes oracle (a rule which minimizes the Bayes risk) converges to one.
Our main interest is in the asymptotic scheme where the proportion $p$
of ``true'' alternatives converges to zero.

We fully characterize the class of fixed threshold multiple testing
rules which are ABOS, and hence derive conditions for the asymptotic
optimality of rules controlling the Bayesian False Discovery Rate
(BFDR). We finally provide conditions under which the popular
Benjamini--Hochberg (BH) and Bonferroni procedures are ABOS and show
that for a wide class of sparsity levels, the threshold of the former
can be approximated by a nonrandom threshold.

It turns out that while the choice of asymptotically optimal FDR levels
for BH depends on the relative cost of a type I error, it is almost
independent of the level of sparsity. Specifically, we show that when
the number of tests $m$ increases to infinity, then BH with FDR level
chosen in accordance with the assumed loss function is ABOS in the
entire range of sparsity parameters $p\propto m^{-\beta}$, with $\beta
\in(0,1]$.
\end{abstract}

%
\begin{keyword}[class=AMS]
\kwd[Primary ]{62C25}
\kwd{62F05}
\kwd[; secondary ]{62C10}.
\end{keyword}
\begin{keyword}
\kwd{Multiple testing}
\kwd{FDR}
\kwd{Bayes oracle}
\kwd{asymptotic optimality}.
\end{keyword}

\end{frontmatter}

\section{Introduction}

Multiple testing has emerged as a very important problem in statistical
inference because of its applicability in understanding large data sets
involving many parameters. A prominent area of the application of
multiple testing is microarray data analysis, where one wants to
simultaneously test expression levels of thousands of genes (see, e.g.,
\cite{ETST,ET,S3,GW2,MPRR,Sk3,SB} or~\cite{STS}). Various ways of performing multiple
tests have been proposed in the literature over the years, typically
differing in their objective. Among the most popular classical multiple
testing procedures, one finds the Bonferroni correction, aimed at
controlling the family wise error rate (FWER) and the
Benjamini--Hochberg procedure~\cite{BH}, which controls the false
discovery rate (FDR). A wide range of empirical Bayes (e.g., see
\cite{E,ET,ETST,BGT} and~\cite{TS}) and full Bayes
tests (see, e.g.,~\cite{MPRR,DMT,BGT} and~\cite{SB})
have also been proposed and are used extensively in such problems.

In the classical setting, a multiple testing procedure is considered to
be \textit{optimal} if it maximizes the number of true discoveries, while
keeping one of the type~I error measures (like FWER, FDR or the
expected number of false positives) at a certain, fixed level. In this
context, it is shown in~\cite{GR} that the Benjamini--Hochberg
procedure (henceforth called BH) is optimal within a large class of
step-up multiple testing procedures controlling FDR.
In recent years many new multiple testing procedures, which have some
optimality properties in the classical sense, have been proposed (e.g.,
\cite{Lehmann,Chi,Pena} or~\cite{Roquain}). In
\cite{Finner} an asymptotic analysis is performed and new step-up and
step-up-down procedures, which maximize the asymptotic power while
controlling the asymptotic FDR, are introduced. Also, in~\cite{S3} and
\cite{SCai} two classical oracle procedures for multiple testing are
defined. The oracle procedure proposed in~\cite{S3} maximizes the
expected number of true positives where the expected number of false
positives is kept fixed. This procedure requires the knowledge of the
true distribution for all test statistics and is rather difficult to
estimate without further assumptions on the process generating the
data. The oracle proposed in~\cite{SCai} assumes that the data is
generated according to a two-component mixture model. It aims at
maximizing the marginal false nondiscovery rate (mFNR), while
controlling the marginal false discovery rate (mFDR) at a given level.
In~\cite{SCai} a data-driven adaptive procedure is developed, which
asymptotically attains the performance of the oracle procedure for any
fixed (though unknown) proportion $p$ of alternative hypothesis.

In this paper we take a different point of view and analyze the
properties of multiple testing rules from the perspective of Bayesian
decision theory. We assume for each test fixed losses $\delta_0$ and
$\delta_A$ for type I and type II errors, respectively, and define the
overall loss of a multiple testing rule as the sum of losses incurred
in each individual test. We feel that such an approach is natural in
the context of testing, where the main goal is to detect significant
signals, rather than estimate their magnitude. In the specific case
where $\delta_0=\delta_A=1$, the total loss is equal to the number of
misclassified hypotheses. Also, we consider the asymptotic scheme,
under which the proportion $p$ of ``true'' alternatives among all tests
converges to zero as the number of tests $m$ goes to infinity, and
restrict our attention to the signals on the \textit{verge of
detectability}, which can be asymptotically detected with the power in $(0,1)$.

In recent years, substantial efforts have been made to understand the
properties of multiple testing procedures under sparsity, that is, in
the case where $p$ is very small (e.g.,~\cite{Djin,Djin2,MR,JinCai,CJ}).
A major theoretical breakthrough was
made in~\cite{A}, where
it has been shown that the Benjamini--Hochberg procedure can be used
for estimating a sparse vector of means, while the level of sparsity
can vary considerably. In~\cite{A} independent normal observations
$X_i, i=1,\ldots,m$, with unknown means $\mu_i$ and known variance are
considered. Among the studied parameter spaces are the $l_0[p_m]$
balls, which consist of those real $m$-vectors for which the fraction
of nonzero elements is at most $p_m$. A data-adaptive thresholding
estimator for the unknown vector of means is proposed using the
Benjamini--Hochberg rule at the FDR level $\alpha_m \geq\frac{\gamma
}{\log m}$ for some $\gamma> 0$ and all $m>1$. If the FDR control
level $\alpha_m$ converges to $\alpha_0 \in[0,1/2]$, this estimator is
shown to be asymptotically minimax, simultaneously for a large class of
loss functions (and in fact for many different types of sparsity
classes including the $l_0$ balls),\vspace*{1pt} as long as $p_m$ is in the range
$[\frac{\log^5 m}{m}, m^{-\xi}]$, with $\xi\in(0,1)$.


In this paper we provide new theoretical results, which illustrate the
asymptotic optimality properties of BH under sparsity in the context of
Bayesian decision theory. BH is a very interesting procedure to analyze
from this point of view, since, despite its frequentist origin, it
shares some of the major strengths of Bayesian methods. Specifically,
as shown in~\cite{ET} and~\cite{GW1}, BH can be understood as an
empirical Bayes approximation to the procedure controlling the
``Bayesian'' False Discovery Rate (BFDR). This approximation relies
mainly on estimation of the distribution generating the data by the
empirical distribution function. In this way, similarly to standard
Bayes methods, it gains strength by combining information from all the
tests. The major issue addressed in this paper is the relationship
between BFDR control and optimization of the Bayes risk. Our research
was motivated mainly by the good properties of BH with respect to the
misclassification rate under sparsity, documented in~\cite{BGT,BGOT} and
\cite{GW1}. The present paper lends theoretical support to
these experimental findings, by specifying a large range of loss
functions for which BH is asymptotically optimal in a Bayesian decision
theoretic context.

The outline of the paper is as follows. In Section~\ref{Section:SM} we
define and discuss our model, and we introduce the decision theoretic
and asymptotic framework of the paper. The Bayes oracle, which
minimizes the Bayes risk, is presented, which applies a \textit{fixed
threshold} critical region for each individual test. Conditions are
formulated under which the asymptotic power of this rule is larger than
0, but smaller than 1. Two different levels of sparsity, the extremely
sparse case and a slightly denser case, are defined, which play a
prominent role throughout the paper.

In Section~\ref{Section:ABOS} we compute the asymptotic risk of the
Bayes oracle, and we formally define the concept of asymptotic Bayes
optimality under sparsity (ABOS).
We then study fixed threshold tests in great detail and fully
characterize the class of fixed threshold testing rules being ABOS.
In the subsequent Section~\ref{Section:BFDR}
we study fixed threshold multiple testing rules which make use of the
unknown model parameters to control the Bayesian False Discovery Rate
(BFDR) exactly at a given level $\alpha$. We provide conditions for
such rules to be ABOS and also consider ABOS of the closely related
fixed threshold tests using the asymptotic approximation of the BH
threshold $c_{\mathrm{GW}}$, introduced by Genovese and Wasserman~\cite{GW1}.
Specifically, in Corollary~\ref{Lu1} we show that if $p\propto
m^{-\beta
}$ for some $\beta>0$, then the asymptotically optimal BFDR levels
depend mainly on the ratio of loss functions for type~I and type II
errors and are independent of $\beta$.

The main results of the paper are included in Section \ref
{Section:rules}, where we specify conditions under which the Bonferroni
rule as well as the Benjamini--Hochberg procedure are ABOS.
Specifically, Theorem~\ref{BHthreshold} shows that when FDR levels $\alpha_m
\rightarrow\alpha_{\infty}<1$ satisfy the conditions of optimality of
BFDR controlling rules, then the difference between the random
threshold of BH and the Genovese--Wasserman threshold $c_{\mathrm{GW}}$
converges to 0 for any sequence of sparsity parameters $p_m \propto
m^{-\beta}$, with $\beta\in(0,1)$. Theorem~\ref{BHthr} shows that for the same
FDR levels BH is ABOS whenever $p_m \propto m^{-\beta}$, with $\beta
\in
(0,1]$. Thus, our results show that BH adapts to the unknown level of
sparsity. However, we also show that the optimal FDR controlling level
depends on the relative cost of a type I error---it should be chosen to
be small if the relative cost of the type I error is large.
Specifically, within our asymptotic framework, the Benjamini--Hochberg
rule controlling the FDR at a fixed level $\alpha\in(0,1)$ is ABOS
for a wide range of sparsity levels, provided that the ratio of losses
for type I and type II errors converges to zero at a slow rate which
can vary widely. When the loss ratio is constant, similar optimality
results hold if the FDR controlling level slowly converges to zero.

Section~\ref{Disc} contains a discussion and directions for further
research. The proof of the asymptotic optimality of BH can be found in
Section~\ref{App}, while the remaining lengthy proofs can be found in
the supplemental report~\cite{App}.

\section{Statistical model and asymptotic framework} \label{Section:SM}

Suppose we have $m$ independent observations $X_1,\ldots,X_m$, and
assume that each $X_i$ has a normal $N(\mu_i, \sigma_{\varepsilon}^2)$
distribution. Here $\mu_i$ represents the effect under investigation,
and $\sigma_{\varepsilon}^2$ is the variance of the random noise (e.g.,
the measurement error). We assume that each $\mu_i$ is an independent
random variable, with distribution determined by the value of the
unobservable random variable $\nu_i$, which takes values 0 and 1 with
probabilities $1-p$ and $p$, respectively, for some $p \in(0,1)$. We
denote by $H_{0i}$ the event that $\nu_i = 0$, while $H_{Ai}$ denotes
the event $\nu_i = 1$. We will refer to these events as the null and
alternative hypotheses. Under $H_{0i}$, $\mu_i$ is assumed to have a
$N(0,\sigma_0^2)$ distribution (where $\sigma_0^2 \geq0$), while under
$H_{Ai}$ it is assumed to have a $N (0,\sigma_0^2+\tau^2)$ distribution
(where $\tau^2 > 0$). Hence, we are really modeling the $\mu_i$'s as
i.i.d. r.v.'s from the following mixture distribution:
%
\begin{equation}\label{modelmi}
\mu_i \sim(1-p) N(0,\sigma_0^2)+pN(0,\sigma_0^2+\tau^2) .
\end{equation}

This implies that the marginal
distribution of $X_i$ is the scale mixture of normals, namely,
%
\begin{equation}\label{modelX}
X_i \sim(1-p) N(0,\sigma^2)+pN(0,\sigma^2+\tau^2) ,
\end{equation}
where $\sigma^2=\sigma_{\varepsilon}^2+\sigma_0^2$.

Note that in the case where $\sigma_0^2=0$, $H_{0i}$ corresponds to the
point null hypothesis that $\mu_i=0$, and $H_{Ai}$ says that $\mu_i
\neq0$ [since under $H_{Ai}$ $P(\mu_i=0)=0)$]. Thus this model can be
used for simultaneously testing if the means of the $X_i$'s are zero or
not. Allowing $\sigma_0^2>0$ greatly extends the scope of the
applications of the proposed mixture model under sparsity. In many
multiple testing problems it seems unrealistic to assume that the vast
majority of effects are exactly equal to zero. For example, in the
context of locating genes influencing quantitative traits, it is
typically assumed that a trait is influenced by many genes with very
small effects, so called polygenes. Such genes form a background, which
can be modeled by the null component of the mixture. In this case the
main purpose of statistical inference is the identification of a small
number of significant ``outliers,'' whose impact on the trait is
substantially larger than that of the polygenes. These important
``outlying'' genes are modeled by the nonnull component of the mixture.

In the remaining part of the paper we will assume that the variance of
$X_i$ under the null hypothesis, $\sigma^2$, is known. This assumption
is often used in the literature on the asymptotic properties of
multiple testing procedures (see, e.g.,~\cite{A} or~\cite{Djin}).
Some discussion concerning the general issue of estimation of
parameters in sparse mixtures is provided in Section~\ref{Disc}.
\begin{rem}
Note that given $\mu_i$, the distribution of $X_i$ is a
location shift of the distribution under the null. This is the setting
in which multiple testing is typically analyzed in the classical
context. In our extended Bayesian model, the choice of a normal $
N(0,\sigma_0^2 + \tau^2)$ prior for $\mu_i$ under the alternative
results in a corresponding normal $ N(0,\sigma^2+ \tau^2)$ marginal
distribution for $X_i$, which differs from the null distribution only
by a larger scale parameter. The proposed mixture model for $X_i$ is a
specific example of the two-groups model, which was discussed in a
wider nonparametric context, for example, in~\cite{ETST,E,BGT} and
\cite{GW2}. Similar Gaussian mixture models for multiple
testing were considered, for example, in~\cite{CJ} and~\cite{E1}.
Restricting attention to scale mixtures of normal distributions allows
us to reduce the technical complexity of the proofs and to concentrate
on the main aspects of the problem. Moreover, we believe that the
proposed model is applicable in many practical situations,
when there are no prior expectations concerning the sign of $\mu_i$.
Our asymptotic results may be extended to the situation when the
distribution of $\mu_i$ under the alternative is not symmetric about 0.
Namely, the techniques presented in the related report~\cite{FBC} can
be used for a similar asymptotic analysis when the ``alternative''
normal distribution $N(0,\sigma_0^2+\tau^2)$ of $\mu_i$ in the model
(\ref{modelmi}) is replaced by a general scale distribution, with the
scale parameter playing role of~$\tau$. A manuscript dealing with this
case is in preparation.
\end{rem}

We consider a Bayesian decision theoretic formulation of the multiple
testing problem of testing $H_{0i}$ versus $H_{Ai}$, for $i=1,\ldots,m$
simultaneously. For each $i$, there are two possible ``states of
nature,'' namely $H_{0i}$ with $X_i \sim N(0, \sigma^2)$ or $H_{Ai}$
with $X_i \sim N(0, \sigma^2+\tau^2)$, that occur with probabilities
$(1-p)$ and $p$, respectively.
Table~\ref{table1} defines the matrix of losses for making a decision in the $i$th test.

\begin{table}
\tablewidth=200pt
\caption{Matrix of losses}\label{table1}
\begin{tabular*}{\tablewidth}{@{\extracolsep{\fill}}lcc@{}}
\hline
& \textbf{Choose} $\bolds{H_{0i}}$ & \textbf{Choose} $\bolds{H_{Ai}}$\\
\hline
$H_{0i}$ true & 0 & $\delta_0$\\
$H_{Ai}$ true & $\delta_A$ & 0\\
\hline
\end{tabular*}
\end{table}

We assume that the overall loss in the multiple testing procedure is
the sum of losses for individual tests. Thus our approach is based on
the notion of an additive loss function, which goes back to~\cite{Leh1}
and~\cite{Leh2}, and seems to be implicit in most of the current formulations.

Under an additive loss function, the compound Bayes decision problem
can be solved as follows. It is easy to see that the expected value of
the total loss is minimized by a procedure which simply applies the
Bayesian classifier to each individual test. For each $i$, this leads
to choosing the alternative hypothesis $H_{Ai}$ in cases such that
%
\begin{equation}\label{BOdef}
\frac{\phi_A(X_i)}{\phi_0(X_i)}\geq\frac{(1-p) \delta_0}{p \delta
_A} ,
\end{equation}
where $\phi_A$ and $\phi_0$ are the densities of $X_i$ under the
alternative and null hypotheses, respectively.

After substituting in the formulas for the appropriate normal
densities, we obtain the optimal rule
%
\begin{equation}\label{BO}
\mbox{Reject } H_{0i} \qquad\mbox{if } \frac{X_i^2}{\sigma^2}\geq c^2 ,
\end{equation}
where
%
\begin{equation}\label{crit}
c^2=c^2_{\tau,\sigma,f,\delta}=\frac{\sigma^2+\tau^2}{\tau
^2}
\biggl(\log\biggl(\biggl(\frac{\tau}{\sigma}\biggr)^2+1\biggr)+2\log
(f\delta
)\biggr)
\end{equation}
with $f=\frac{1-p}{p}$
and $\delta=\frac{\delta_0}{\delta_A}$. We call this rule a \textit{Bayes
oracle}, since it makes use of the unknown parameters of the mixture,
$\tau$ and $p$,
and therefore is not attainable in finite samples.

Using standard notation from the theory of testing, we define the
probability of a type I error as
\[
t_{1i}=P_{H_{0i}}(H_{0i} \mbox{ is rejected})
\]
and the probability of a type II error as
\[
t_{2i}=P_{H_{Ai}}(H_{0i} \mbox{ is accepted}) .
\]

Note that under our mixture model the marginal distributions of $X_{i}$
under the null and alternative hypotheses do not depend on $i$, and the
threshold of the Bayes oracle is also the same for each test. Hence,
when calculating the probabilities of type I errors and type II errors
for the Bayes oracle, we can, and will henceforth, suppress $i$ from
$t_{1i}$ and $t_{2i}$. The same remark also applies to any fixed
threshold procedure which, for each $i$, rejects $H_{0i}$ if
$X_i^2/{\sigma^2} > K$ for some constant $K$.

\subsection{The asymptotic framework}\label{assA}

We now want to motivate the asymptotic framework which will be
formally introduced below as Assumption~\ref{Assumption(A)}.
Let $\gamma=(p, \tau^2, \sigma^2, \delta_0, \delta_A)$ be the vector
of parameters defining the Bayes oracle
(\ref{crit}). In our asymptotic analysis, we will consider infinite
sequences of such $\gamma$'s. A natural example of such a situation
arises when the number of tests $m$ increases to infinity, and the
vector $\gamma$ varies with the number of tests $m$. But here we are
actually trying to understand, in a unified manner, the general
limiting problem when $\gamma$ varies through a sequence.

The threshold (\ref{crit}) depends on $\tau$ and $\sigma$ only through
$u=(\frac{\tau}{\sigma})^2$. Note that $u$ is a natural
scale for measuring the strength of the signal in terms of the variance
of $X_i$ under the null. We also introduce another parameter
$v=uf^2\delta^2$, which can be used to simplify the formula for the
optimal threshold
%
\begin{equation}\label{crit_uv}
c^2_{u,v}= \biggl(1+\frac{1}{u}\biggr)\bigl(\log v+\log(1+1/u)\bigr) .
\end{equation}

Observe that under the alternative $\frac{X_i}{\sigma}$ has a normal
$N(0,1+u)$ distribution. Thus the probability of a type II error using
the Bayes oracle is given by
%
\begin{equation}\label{type2,1}
t_2=P\biggl(Z^2<\frac{1}{u+1} c^2_{u,v}\biggr) ,
\end{equation}
where $Z$ is a standard normal variable.

From\vspace*{-1pt} (\ref{type2,1}) it follows that given an arbitrary infinite
sequence of $\gamma$'s, the limiting power of the Bayes oracle\vspace*{1pt} is
nonzero only if the corresponding sequence $\frac{c^2_{u,v}}{u+1}$
remains bounded. We will restrict ourselves to such sequences, since
otherwise even the Bayes oracle cannot guarantee nontrivial inference
in the limit and all rules will perform poorly.

The focus of this paper is the study of the inference problem when $p
\rightarrow0$, and the goal is to find procedures which will
efficiently identify signals under such circumstances. To clarify these
ideas, consider the situation where $p \rightarrow0$ and $\log(\delta
)=o(\log p)$. It is immediately evident from (\ref{crit}) that in this
situation $c^2=c^2_{u,v}$ diverges to infinity. Hence\vspace*{-1pt} $\frac
{c^2_{u,v}}{u+1}$ remains bounded only when the signal magnitude $u$
diverges to infinity, in which case $\frac{c^2_{u,v}}{u+1}\propto
\frac
{\log v}{u}$. This explains two of the three asymptotic conditions we
impose in Assumption~\ref{Assumption(A)}. The third condition $v
\rightarrow\infty$
pragmatically ensures that $\delta$ is not allowed to converge to zero
too quickly.
\renewcommand{\theAssumption}{(A)}
\begin{Assumption}\label{Assumption(A)}
A sequence of vectors $\{\gamma_t = (p_t, \tau
_t^2, \sigma_{t}^2, \delta_{0t},\delta_{At}); t \in\{1,2,\ldots\} \}
$ satisfies this assumption if the corresponding sequence of parameter
vectors, $\theta_t=(p_t, u_t, v_t)$, fulfills the following conditions:
$p_t\rightarrow0$, $u_t \rightarrow\infty$, $v_t\rightarrow\infty$ and
$\frac{\log v_t}{u_t}\rightarrow C \in(0,\infty)$, as $t \rightarrow
\infty$.
\end{Assumption}
\begin{rem}
We do not allow $C=\infty$ in Assumption~\ref{Assumption(A)} because
then the limit of
the probability of a type II error for Bayes oracle is equal to one,
and signals cannot be identified.
If $C=0$, then the oracle has a limiting power equal to one. Such a
situation can occur naturally if the number of replicates used to
calculate $X_i$ increases to infinity as $p\rightarrow0$ (see, e.g.,
\cite{FBC}). However, in this article we will restrict ourselves to $C
\in(0, \infty)$, that is, the case where the asymptotic power is
smaller than one. The corresponding parametric region might be thought
of as being at ``the verge of detectability.'' The extension of the
asymptotic results presented in this paper to the case when $C=0$ as
well as to some cases when $p$ does not converge to zero can be found
in~\cite{BCFG}, which is an extended version of this manuscript.
Specifically, Theorems~\ref{riskopt},~\ref{main} and~\ref{Lu0} below
hold in exactly the same form even when the condition $p\rightarrow0$
is eliminated from Assumption~\ref{Assumption(A)}.
\end{rem}
\begin{rem} \label{Rem_gen}
We will frequently consider the generic situation
%
\begin{equation}\label{bound_logdelta}
\log\delta=o(\log p) .\vadjust{\goodbreak}
\end{equation}
In that case Assumption~\ref{Assumption(A)} reduces to $p\rightarrow0$,
$u \rightarrow
\infty$, $v\rightarrow\infty$ and
$-\frac{2 \log p}{u}\rightarrow C \in(0,\infty)$ and specifies the
relationship between the magnitude $u$ of asymptotically detectable
signals and the sparsity parameter $p$. Interestingly, the relationship
$u\propto- \log p$, can be related to asymptotically least-favorable
configurations for $l_0[p]$ balls discussed in Section 3.1 of~\cite{A}.
Ignoring constants, the typical magnitudes of observations
corresponding to such signals will be similar to the threshold of the
minimax hard thresholding estimator corresponding to the parameter
space~$l_0[p]$.
\end{rem}

\textit{Notation}: We will usually suppress the index $t$ of the elements
of the vector $\gamma_t$ and $\theta_t$. Unless otherwise stated,
throughout the paper the notation $o_t$ will denote an infinite
sequence of terms indexed by $t$, which go to zero when $t\rightarrow
\infty$. In many cases $t$ is the same as the number of tests $m$, and
in such cases the notation $o_t$ will be replaced by $o_m$.\vspace*{8pt}

In case of $m \rightarrow\infty$ we will consider specifically two
different levels of sparsity. The first, the extremely sparse case, is
characterized by
%
\begin{equation}\label{sparse}
mp_m \rightarrow s \in(0, \infty] \quad\mbox{and}\quad \frac{\log
(mp_m)}{\log
m}\rightarrow0 .
\end{equation}
Condition (\ref{sparse}) is satisfied, for example, when $p_m\propto
\frac{1}{m}$. In this situation the expected number of ``signals'' does
not increase with $m$, which makes it impossible to consistently
estimate the mixture parameters.
The second, ``denser'' case is characterized by
%
\begin{equation}\label{dense}
p_m \rightarrow0 \quad\mbox{and}\quad \frac{\log(mp_m)}{\log m}\rightarrow
C_p\in
(0,1] ,
\end{equation}
which includes $p_m\propto m^{-\beta}$ for $0 < \beta< 1$.

\section{Asymptotic Bayes-optimality under sparsity} \label{Section:ABOS}

We start by computing type I and type II error rates of the Bayes oracle.
As usual $\Phi$ denotes the cumulative distribution function and $\phi$
the density of the standard normal distribution.
\begin{lem}\label{type1lem}
Under Assumption~\ref{Assumption(A)} the probabilities of type \textup{I} and
type~\textup{II} error
using the Bayes oracle are given by the following equations:
%
\begin{eqnarray}\label{t1,0}
t_{1}&=&e^{-C/2}\sqrt{\frac{2}{\pi v \log v}} (1+o_{t}),
\\
\label{t2n}
t_{2}&=&\bigl(2\Phi\bigl(\sqrt{C}\bigr)-1\bigr) (1+o_{t}) .
\end{eqnarray}
\end{lem}
\begin{pf}
Note that $t_{1}=P(|Z|>c_{u,v})$. Moreover,
%
\begin{equation}\label{th1}
c^2_{u,v}=(1+z_{u,v}) \log v ,
\end{equation}
where $\lim_{u \rightarrow\infty, v\rightarrow\infty} z_{u,v} u=1$.
Therefore, we obtain
\[
\phi(c_{u,v})\sqrt{2\pi v}= \exp\biggl(\frac{-z_{u,v}\log
v}{2}\biggr) ,
\]
which, together with Assumption~\ref{Assumption(A)}, yields
%
\begin{equation}\label{step1,2}
\phi(c_{u,v})=e^{-C/2}\sqrt{\frac{1}{2\pi v}}(1+o_{t}) .
\end{equation}

Now the proof follows easily by invoking the well-known approximation
to the tail probability of the standard normal distribution
%
\begin{equation}\label{step1,1}
P(|Z|>c)=\frac{2\phi(c)}{c}\bigl(1-z_1(c)\bigr) ,
\end{equation}
where
$z_1(c)$ is a positive function such that $z_1(c)c^2=O(1)$ as $c
\rightarrow\infty$.

The formula for type II error immediately follows from (\ref{type2,1})
and Assumption~\ref{Assumption(A)}.
\end{pf}

\subsection{The Bayes risk}\label{THB}

Under an additive loss function, the Bayes risk for a multiple testing
procedure is given by
%
\begin{equation}\label{risk01}
R = \delta_0 E(V) + \delta_A E(T) ,
\end{equation}
where $E(V)$ and $E(T)$ are the expected numbers of false positives and
false negatives, respectively.
In particular, under our mixture model, the Bayes risk for a fixed
threshold multiple testing procedure is given by
%
\begin{equation}\label{risk11}
R=m\bigl((1-p)t_1\delta_0+pt_2\delta_A\bigr) .
\end{equation}

Equations (\ref{t1,0}) and (\ref{t2n}) easily yield the following
asymptotic approximation to the optimal Bayes risk.
\begin{thr}\label{riskopt}
Under Assumption~\ref{Assumption(A)}, using the Bayes
oracle, the risk takes the form
%
\begin{equation}\label{optrisk1}
R_{\mathrm{opt}}=mp\delta_{A} \bigl(2\Phi\bigl(\sqrt{C}\bigr)-1\bigr)(1+o_{t}) .
\end{equation}
\end{thr}
\begin{rem}\label{assymetry}
It is important to note that under Assumption~\ref{Assumption(A)},
the asymptotic
form of the risk of the Bayes oracle in (\ref{riskopt}) is determined
by its type II error component. In fact the probability of type II
error, $t_2$, is much less sensitive to changes in the threshold value
than the probability of type I error, $t_1$. In particular, it is easy
to see that the same asymptotic form of $t_2$ [as in (\ref{t2n})] is
achieved by any multiple testing rule rejecting the null hypothesis
$H_{0i}$ when $X_i^2/\sigma^2 >c_t^2$, with $c_t^{2}=\log v +z_t$ and
$z_t=o(\log v)$. Probability of type I error is substantially more
sensitive to changes in the critical value, even if $z_t = o(\log v)$.
If $z_t$ is always positive, then the \textit{rate} of convergence of the
probability of type I error to zero is faster than that of the optimal
rule, and the total risk is still determined by the type II component.
Therefore the rule remains optimal as long as $z_t = o(\log v)$.
However, if $z_t=o(\log v)$ can take negative values, the situation is
quite subtle. In this case the rate of convergence of the probability
of type I error to zero may be equal or slower than that of the optimal
rule, making the overall risk of the rule substantially larger than
$R_{\mathrm{opt}}$. These observations are formally summarized in
Theorem~\ref{main}, which gives a characterization of the set of the asymptotically
optimal fixed threshold multiple testing rules.
\end{rem}
%
%
\begin{definition*}
Consider a sequence of parameter vectors
$\gamma_t$, satisfying Assumption~\ref{Assumption(A)}.
We call a multiple testing rule asymptotically Bayes optimal under
sparsity (ABOS) for $\gamma_t$ if its risk $R$ satisfies
\[
\frac{R}{R_{\mathrm{opt}}} \rightarrow1 \qquad\mbox{as } t\rightarrow\infty,
\]
where $R_{\mathrm{opt}}$ is the optimal risk, given by Theorem~\ref{riskopt}.
\end{definition*}
\begin{rem}
This definition relates optimality to a particular sequence of
$\gamma$ vectors satisfying Assumption~\ref{Assumption(A)}. However,
the asymptotically
optimal rule for a specific sequence $\gamma_t$ is also typically
optimal for a large set of ``similar'' sequences. The asymptotic
results presented in the following sections of this paper characterize
these ``domains'' of optimality for some of the popularly used multiple
testing rules. Since Assumption~\ref{Assumption(A)} is an inherent part
of our
definition of optimality, we will refrain from explicitly stating it
when reporting our asymptotic optimality results.
\end{rem}

The following theorem fully characterizes the set of asymptotically
Bayes-optimal multiple testing rules with fixed thresholds.
\begin{thr}\label{main}
A multiple testing rule of the form (\ref{BO}) with threshold
$c^2=c^2_t=\log v + z_{t}$ is ABOS if and only if
%
\begin{equation}\label{con1}
z_{t}=o(\log v)
\end{equation}
and
%
\begin{equation}\label{popr}
z_{t}+ 2\log\log v \rightarrow\infty.
\end{equation}
\end{thr}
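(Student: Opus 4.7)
The plan is to reparametrize the candidate rule as $c^2 = \log v + z_t$ and split the risk ratio as
\begin{equation*}
\frac{R}{R_{opt}} \;=\; \underbrace{\frac{m(1-p)\delta_0\, t_1}{R_{opt}}}_{A_t} \;+\; \underbrace{\frac{m p \delta_A\, t_2}{R_{opt}}}_{B_t},
\end{equation*}
so that asymptotic optimality is equivalent to $A_t \to 0$ and $B_t \to 1$. This split is natural because, as noted after Theorem \ref{riskopt}, for the oracle itself the type I contribution is of smaller order than the type II contribution.

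To analyze $A_t$ I would rerun the calculation of Lemma \ref{typeI} with the general threshold: $\phi(c) = (2\pi v)^{-1/2} e^{-z_t/2}(1+o_t)$ and the standard normal tail expansion yield $t_1 = \sqrt{2/(\pi v \log v)}\, e^{-z_t/2}(1+o_t)$ whenever $z_t = o(\log v)$. Using $(1-p) = p f$ and $v = u f^2 \delta^2$, I rewrite $m(1-p)\delta_0 = m p \delta_A \sqrt{v/u}$, so that $m(1-p)\delta_0 t_1 \sim m p \delta_A \sqrt{2/(\pi u \log v)}\, e^{-z_t/2}$. Dividing by the two forms of $R_{opt}$ given in Theorem \ref{riskopt}, the case $C=0$ yields $A_t \sim e^{-z_t/2}/\log v$ directly, and the case $C>0$ yields $A_t \sim \text{const}\cdot e^{-z_t/2}/\sqrt{u \log v}$; but when $C>0$ we have $\log v \sim C u$, so $\log(u \log v) \sim 2\log\log v$. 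In both regimes, $A_t \to 0$ is therefore equivalent to $z_t + 2\log\log v \to \infty$.

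For $B_t$, note $t_2 = P(Z^2 < (\log v + z_t)/(u+1))$ with $\log v/(u+1) \to C$. If $z_t = o(\log v)$, then $(\log v + z_t)/(u+1) \to C$; when $C>0$ both $t_2$ and the oracle's $t_2^*$ from Lemma \ref{type1lem} tend to the positive constant $2\Phi(\sqrt{C})-1$, and when $C=0$ the expansion of $\Phi$ near $0$ gives $t_2 \sim \sqrt{2(\log v + z_t)/(\pi u)} = \sqrt{1 + z_t/\log v}\cdot t_2^*$. In either case $B_t \to 1$ if and only if $z_t = o(\log v)$. Sufficiency of the two stated conditions is then immediate.

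For necessity I pass to subsequences. If $z_t = o(\log v)$ fails, extract a subsequence along which $z_t/\log v$ stays away from $0$; either $1 + z_t/\log v$ has a finite nonzero accumulation point (and then $B_t$ limits to a value different from $1$ by the formulas above), or the rule degenerates to rejecting nothing or everything ($c^2 \to \infty$ or $c^2<0$), and a direct check shows $A_t + B_t \not\to 1$. If instead $z_t = o(\log v)$ holds but $z_t + 2\log\log v \not\to \infty$, then along a subsequence $e^{-z_t/2}/\log v$ is bounded away from $0$, so $A_t$ is bounded away from $0$ while $B_t \to 1$, again ruling out optimality. The main technical obstacle is the uniform treatment of the type I ratio across the regimes $C=0$ and $C>0$ --- specifically, verifying that the extra $1/\sqrt{u\log v}$ factor appearing when $C>0$ is absorbed precisely by the equivalence $\log(u\log v) \sim 2\log\log v$, so that a single condition $z_t + 2\log\log v \to \infty$ suffices --- together with the careful handling of degenerate thresholds in the ``only if'' direction.
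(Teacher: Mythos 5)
Your sufficiency argument and your proof that (\ref{popr}) is necessary once (\ref{con1}) holds are correct and essentially the paper's own: the paper likewise splits the risk into its type I and type II components, derives $t_1=\sqrt{2/(\pi v\log v)}\,e^{-z_t/2}(1+o_t)$ under (\ref{con1}), and collapses the regimes $C=0$ and $C>0$ into the single condition $z_t+2\log\log v\to\infty$ exactly as you do (in the paper via the ratio $R_1/R_2$ rather than your $A_t=R_1/R_{opt}$, which is the same computation).

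The gap is in the necessity of (\ref{con1}). You assert at the outset that optimality is equivalent to ``$A_t\to 0$ and $B_t\to 1$'', but optimality only says $A_t+B_t\to 1$, and the equivalence is precisely what must be proved; it is non-obvious exactly when the threshold is taken \emph{below} the oracle's. If $z_t/\log v$ has an accumulation point $s\in(-1,0)$, then along that subsequence $B_t\to b<1$ (the rule has smaller type II error than the oracle), so your observation that ``$B_t$ limits to a value different from $1$'' does not by itself contradict $A_t+B_t\to 1$: you must show that $A_t$ does not converge to $1-b$. What is true, and what the paper proves in the first two sub-cases of its case (i), is that $A_t\to\infty$ there: for $c_t^2=(1+s)\log v\,(1+o(1))$ the tail bound gives $t_1\asymp e^{-c_t^2/2}/c_t$, hence $A_t\asymp \exp(-z_t/2-\log\log v)/\sqrt{1+z_t/\log v}$, which diverges because $z_t\sim s\log v\to-\infty$ overwhelms $2\log\log v$; your formula $A_t\sim e^{-z_t/2}/\log v$ was derived only under $z_t=o(\log v)$ and cannot be invoked in this regime. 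The same care is needed at the boundary $z_t/\log v\to-1$, where $c_t$ may remain bounded and $t_1$ tends to a positive constant (note that $c^2<0$ cannot occur, since $c^2$ is a squared threshold, so $z_t\geq-\log v$; this bounded-threshold case, not a negative threshold, is the degenerate case to handle, and there too one shows $A_t\asymp\sqrt{v/\log v}\to\infty$). Your ``direct check'' for degenerate thresholds is not carried out, and your case analysis files the problematic regime $s\in(-1,0)$ under the branch you dismiss with $B_t\neq 1$; supplying the separate type I computation for $z_t\asymp s\log v$, $s\in[-1,0)$, is what completes the argument and is where the paper's proof does its real work.
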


The proof of Theorem~\ref{main} is provided in Section 8 of~\cite{App}.
\begin{rem}
Conditions (\ref{con1}) and (\ref{popr}) guarantee the
asymptotic Bayes optimality of the components of risk corresponding to
type II and type I errors, respectively.
\end{rem}

In the following corollary we present multiple testing rules which are
ABOS in the generic situation of Remark~\ref{Rem_gen}, where $u
\propto
-\log p$.\vadjust{\goodbreak}
\begin{cor}\label{lemRIC}
Assume (\ref{bound_logdelta}) holds, $\delta$ is bounded from above,
$m\rightarrow\infty$ and $p \propto m^{-\beta}$, with
$\beta>0$. Then a fixed threshold multiple testing rule (\ref{BO})
based on the threshold
%
\begin{equation}\label{RIC}
c^2=c^2_{m}= 2 \beta\log m+ d ,
\end{equation}
where $d\in\mathbb{R}$,
is ABOS.
\end{cor}

The proof is straightforward and is thus skipped.
\begin{rem}
The optimal threshold, provided in Corollary~\ref{lemRIC},
depends on the unknown parameter $\beta$.
It may be pointed out that it is proved in Section~\ref{Section:rules} that the
Benjamini--Hochberg multiple testing procedure adapts to this unknown
sparsity and, under very mild conditions on $\delta$ and the FDR level
$\alpha$, is ABOS whenever $0<\beta\leq1$. Corollary~\ref{lemRIC}
shows also that the universal threshold $2\log m$ of~\cite{DJ1} is ABOS
when $\beta=1$. Thus, within our asymptotic framework, the universal
threshold is asymptotically optimal when the expected number of true
signals does not increase with $m$.
\end{rem}

\section{Controlling the Bayesian False Discovery Rate} \label{Section:BFDR}

In a seminal paper~\cite{BH}, Benjamini and Hochberg introduced the
False Discovery Rate (FDR) as a measure of the accuracy of a multiple
testing procedure
%
\begin{equation}\label{FDR}
\mathrm{FDR}=E\biggl(\frac{V}{R}\biggr) .
\end{equation}
Here $R$ is the total number of null hypotheses rejected, $V$ is the
number of ``false'' rejections and it is assumed that $\frac{V}{R}=0$
when $R=0$. For tests with a fixed threshold, Efron and Tibshirani
\cite{ET} define another very similar measure, called the Bayesian False
Discovery Rate, BFDR,
%
\begin{equation}\label{BFDR}\quad
\mathrm{BFDR}=P(H_{0i} \mbox{ is true}| H_{0i} \mbox{ was
rejected})=\frac{(1-p)t_1}{(1-p)t_1+p(1-t_2)} ,
\end{equation}
where $t_1$ and $t_2$ are the probabilities of type I and type II
errors.

According to Theorem 1 of~\cite{S2}, in the case when individual test
statistics are generated by the
two-component mixture model and the multiple testing procedure uses
the same fixed threshold for each of the tests, $\mathrm{BFDR}$ coincides with
the positive False Discovery Rate $\mathrm{pFDR}$ of~\cite{S2}, defined as
\[
\mathrm{pFDR}=E\biggl(\frac{V}{R}\Big|R>0\biggr)=\frac{\mathrm{FDR}}{P(R>0)} .
\]

Note\vspace*{-1pt} here that in our context it is enough to consider threshold tests
that reject for high values of $\frac{X_i^2}{\sigma^2}$. This is\vspace*{1pt} due to
the fact that from the MLR property and the Neyman--Pearson lemma, it
can be easily proved that any other kind of test with the same type 1
error will have a larger BFDR and Bayesian False Negative Rate (BFNR).\vadjust{\goodbreak}

Extensive simulation studies and theoretical calculations in
\cite{GW1,BGT} and~\cite{BGOT} illustrate that multiple testing
rules controlling the BFDR at a small level $\alpha\approx0.05$
behave very well under sparsity in terms of minimizing the
misclassification error (i.e., the Bayes risk for $\delta_0=\delta_A$).
We also recall in this context that a test has BFDR $\alpha$ if and
only if
%
\begin{equation} \label{BFDRB}
(1-\alpha)(1-p)t_1+ \alpha p t_2 = \alpha p ,
\end{equation}
the left-hand side of (\ref{BFDRB}) being the Bayes risk for $\delta_0 =
1-\alpha$ and $\delta_{A} = \alpha$. So the definition of the BFDR
itself has a strong connection to the Bayes risk and a ``proper''
choice of $\alpha$ might actually yield an optimal rule (for similar
conclusions, see, e.g.,~\cite{SCai}). One can show quite easily that
under the mixture model (\ref{modelX}), the BFDR of a test based on the
threshold $c^2$ continuously decreases from $(1-p)$ for $c=0$ to $0$
for $c\rightarrow\infty$ (see Lemma 9.1 of~\cite{App}). In other
words, there exists a 1--1 mapping between thresholds $c\in[0,\infty)$
and BFDR levels $\alpha\in(0,1-p]$. So, if the BFDR control level is
chosen properly, the corresponding threshold can satisfy the conditions
of Theorem~\ref{main}.
\begin{rem}
In~\cite{Chi2} it is argued that when the data are
generated according to the two component mixture model, BFDR of any
fixed threshold rule as well as of the Benjamini--Hochberg procedure is
bounded from below by a constant $\beta^{\star}\geq0$, where $\beta
^{\star}$ depends on the actual mixture density. Lemma 9.1 of \cite
{App} shows under our mixture model (\ref{modelX}) $\beta^{\star}=0$,
that is, the criticality phenomenon of~\cite{Chi2} does not occur. This
is generally true in any case when the ratio of tail probabilities
$P(|X_i|>c)$ under the null and alternative distributions converges to
0 as $c\rightarrow\infty$.
\end{rem}

Now, we give a full characterization of asymptotically optimal BFDR
levels, which will be later used to prove ABOS of BH.

\subsection{ABOS of BFDR rules}

The general Theorem~\ref{Lu0}, below, gives conditions on $\alpha$,
which guarantee optimality for any given sequence of parameters $\gamma
_t$, satisfying Assumption~\ref{Assumption(A)}.
Corollary~\ref{Lu1} presents a special simple choice which works in the
general setting.
In the subsequent Corollary~\ref{Lu3}
we study the generic situation (\ref{bound_logdelta}) of Remark \ref
{Rem_gen}. Finally, Corollary~\ref{Lu4} considers the case where
$\alpha
=\mathrm{const}\in(0,1)$ and gives simple conditions for $\delta$ that guarantee
optimality.

Consider a fixed threshold rule (based on $\frac{X_i^2}{\sigma^2}$)
with the BFDR equal to~$\alpha$. Under the mixture model (\ref{modelX}),
a corresponding threshold value $c_{B}^2$ can be obtained by solving
the equation
%
\begin{equation}\label{BFDR1}
\frac{(1-p)(1-\Phi(c_B))}{(1-p)(1-\Phi(c_B))+p(1-\Phi
({c_B}/{\sqrt{u+1}}))}=\alpha,
\end{equation}
or equivalently, by solving
%
\begin{equation}\label{r1}
\frac{1-\Phi(c_B)}{1-\Phi({c_B}/{\sqrt{u+1}}
)}=\frac
{\alpha}{f(1-\alpha)}=\frac{r_{\alpha}}{f} ,
\end{equation}
where
%
\begin{equation}\label{ralpha}
r_{\alpha}=\frac{\alpha}{1-\alpha} .
\end{equation}

Note that $r_{\alpha}$ converges to 0 when $\alpha\rightarrow0$ and
to infinity when $\alpha\rightarrow1$.

Using Theorem~\ref{main}, one can show that this test is asymptotically
optimal only if $\frac{c_B}{\sqrt{u+1}}$ converges to $\sqrt{C}$, where
$C$ is the constant in Assumption~\ref{Assumption(A)}. From (\ref{r1}),
this in turn
implies that a BFDR rule for a chosen $\alpha$ sequence can only be
optimal if $\frac{r_\alpha}{f}$ goes to zero while satisfying certain
conditions. When $\frac{r_{\alpha}}{f} \rightarrow0$, a convenient
asymptotic expansion for $c_{B}^2$ can be obtained, and optimality
holds if and only if this asymptotic form conforms to the conditions
specified in Theorem~\ref{main}. The following theorem gives the
asymptotic expansion for $c^2_B$ and specifies the range of ``optimal''
choices of $r_{\alpha}$.
\begin{thr}\label{Lu0}
Consider a fixed threshold rule with $\mathrm{BFDR}=\alpha=\alpha_t$. Define
$s_t$ by
%
\begin{equation}\label{w0}
\frac{\log(f\delta\sqrt{u})}{\log(f/r_{\alpha})}=1+s_{t} ,
\end{equation}
where $r_{\alpha}=\frac{\alpha}{1-\alpha}$.
Then the rule is ABOS if and only if
%
\begin{equation}\label{w1}
s_{t}\rightarrow0
\end{equation}
and
%
\begin{equation}\label{w2}
2s_{t} \log(f/r_{\alpha})-\log\log(f/r_{\alpha})\rightarrow
-\infty.
\end{equation}

The threshold for this rule is of the form
%
\begin{equation}\label{asbfdr}
c^2_B=2\log\biggl(\frac{f}{r_{\alpha}}\biggr)-\log\biggl(2\log
\biggl(\frac{f}
{r_{\alpha}}\biggr)\biggr)+C_1+o_{t} ,
\end{equation}
where\vspace*{2pt}
$C_1=\log(\frac{2}{\pi D^2})$, and $D=2(1-\Phi(\sqrt{C}))$
is the asymptotic power. The corresponding probability of a type I
error is equal to
\[
t_1=D\frac{r_{\alpha}}{f}(1+o_{t}) .
\]
\end{thr}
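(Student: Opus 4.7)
The plan is to derive the asymptotic expansion (\ref{asbfdr}) for $c_B^2$ directly from the BFDR defining equation (\ref{r1}), and then translate the fixed-threshold optimality criterion of Theorem \ref{main} into conditions on the level $\alpha_t$ through the substitution $2\log(f/r_\alpha) = \log v/(1+s_t)$ supplied by (\ref{w0}). The type~I error formula falls out of the BFDR identity once $1-t_2$ has been identified asymptotically.

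First I would observe that any optimal rule must have $c_B^2 \sim \log v$ (by Theorem~\ref{main}) and hence $c_B^2/(u+1) \to C$ by Assumption~(A), so that $1-\Phi(c_B/\sqrt{u+1}) \to 1-\Phi(\sqrt{C}) = D/2$. Substituting this into (\ref{r1}) gives
\[
1 - \Phi(c_B) \;=\; \tfrac{D}{2}\cdot\tfrac{r_\alpha}{f}\,(1+o_t).
\]
Because $f/r_\alpha \to \infty$ (forced by $p \to 0$ with $\alpha_t$ bounded away from $1$), we have $c_B \to \infty$; applying Mills' ratio $1-\Phi(x)=(\phi(x)/x)(1+O(x^{-2}))$ and taking logarithms yields the implicit equation
\[
\frac{c_B^2}{2} + \log c_B \;=\; \log\!\left(\frac{f}{r_\alpha}\right) - \log\!\left(\frac{D\sqrt{2\pi}}{2}\right) + o(1).
\]
A standard iterated-log bootstrap of this equation -- first pass $c_B^2 = 2\log(f/r_\alpha)(1+o(1))$, hence $2\log c_B = \log(2\log(f/r_\alpha)) + o(1)$, then resubstitute -- produces exactly (\ref{asbfdr}), using $-2\log(D\sqrt{2\pi}/2) = \log(2/(\pi D^2)) = C_1$.

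Next I would match with Theorem~\ref{main}. Writing $c_B^2 = \log v + z_t$ and applying the identity $2\log(f/r_\alpha) = \log v/(1+s_t)$ from (\ref{w0}) inside (\ref{asbfdr}) gives
\[
z_t \;=\; -\frac{s_t}{1+s_t}\log v - \log\!\left(\frac{\log v}{1+s_t}\right) + C_1 + o(1).
\]
Since $\log\log v = o(\log v)$, condition (\ref{con1}) ($z_t = o(\log v)$) is equivalent to $s_t/(1+s_t) \to 0$, i.e.\ to (\ref{w1}). Granted $s_t \to 0$, the leading balance in $z_t + 2\log\log v$ is $-s_t\log v + \log\log v$, so (\ref{popr}) becomes $s_t\log v - \log\log v \to -\infty$; using $\log v \sim 2\log(f/r_\alpha)$ and $\log\log v = \log\log(f/r_\alpha) + O(1)$ (valid because $f/r_\alpha \to \infty$) this rewrites as (\ref{w2}). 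Reading these equivalences in both directions delivers both halves of the iff, the uniqueness of $c_B$ for each $\alpha_t$ being guaranteed by the strict monotonicity of BFDR in the threshold (Lemma \ref{LBH12}). Finally, rearranging (\ref{BFDR1}) gives $t_1 = (r_\alpha/f)(1-t_2)$, and the expansion above combined with Lemma~\ref{type1lem} yields $1-t_2 = D + o_t$, hence $t_1 = D(r_\alpha/f)(1+o_t)$.

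The main obstacle is that the iterated-log bootstrap in the second paragraph must be carried out with enough precision to capture the additive constant $C_1$, not just the leading $2\log(f/r_\alpha)$ term; and in the necessity direction one has to make sure the first-pass estimate $c_B^2 \sim 2\log(f/r_\alpha)$ is not a circular consequence of having assumed optimality. This is handled by extracting from (\ref{r1}) the a priori bound $1-\Phi(c_B) \le r_\alpha/f$ (since $1-\Phi(c_B/\sqrt{u+1}) \le 1$), which already forces $c_B \to \infty$ at the rate $c_B^2 \sim 2\log(f/r_\alpha)$; everything else is careful algebra driven by Mills' ratio and Assumption~(A).
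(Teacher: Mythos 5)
Your necessity direction is sound and follows essentially the paper's own route (its Lemma \ref{nec}): assuming optimality, Theorem \ref{main} gives $c_B^2\sim\log v$, hence $c_B/\sqrt{u+1}\to\sqrt{C}$ and $1-\Phi(c_B/\sqrt{u+1})\to D/2$, the Mills-ratio bootstrap of (\ref{r1}) gives (\ref{asbfdr}), and the substitution $2\log(f/r_{\alpha})=\log v/(1+s_t)$ turns conditions (\ref{con1}) and (\ref{popr}) into (\ref{w1}) and (\ref{w2}). (One small misstatement: you justify $f/r_{\alpha}\to\infty$ by ``$p\to 0$ with $\alpha_t$ bounded away from $1$,'' which are not hypotheses of the theorem; in the necessity direction it follows from $t_1\to 0$ together with the power staying bounded away from zero, and in the sufficiency direction from (\ref{w1}) and $v\to\infty$, since $2\log(f/r_\alpha)\sim\log v$.)

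The genuine gap is in the sufficiency direction. There you must establish the expansion (\ref{asbfdr}) --- in particular that the additive constant is $C_1=\log\bigl(2/(\pi D^2)\bigr)$ with $D=2(1-\Phi(\sqrt{C}))$, equivalently that $c_B/\sqrt{u+1}\to\sqrt{C}$ --- using only (\ref{w1})--(\ref{w2}) and Assumption (A), not optimality; your opening step ``any optimal rule must have $c_B^2\sim\log v$'' is circular at this point. The patch you offer, the a priori bound $1-\Phi(c_B)\le r_{\alpha}/f$, controls $c_B^2$ only from below (it yields $\liminf c_B^2/(u+1)\ge C$); it does not exclude the possibility that $1-\Phi(c_B/\sqrt{u+1})\to 0$, i.e.\ that $c_B/\sqrt{u+1}\to\infty$ along a subsequence, in which case $1-\Phi(c_B)$ would be of smaller order than $r_{\alpha}/f$, $c_B^2$ would exceed $2\log(f/r_{\alpha})$ by more than the $\log\log$-plus-constant correction, and both the constant $C_1$ and the stated type I and type II error asymptotics would fail. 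The paper closes exactly this hole in Lemma \ref{asbfdrthr}: writing $z_t=c_B/\sqrt{u+1}$, the ratio of the two normal tail approximations in (\ref{r1}) gives $z_t^2=2\log\bigl(f/(r_{\alpha}\sqrt{u+1})\bigr)/u+o(1)$ whenever $z_t\to\infty$, which contradicts $2\log(f/r_{\alpha})/u\to C<\infty$ (a consequence of (\ref{w1}) and Assumption (A)); boundedness plus a subsequence/compactness argument then forces every limit point of $z_t$ to equal $\sqrt{C}$, after which your bootstrap and the matching with Theorem \ref{main} go through as written. You need to supply this boundedness and identification-of-the-limit step (or an equivalent two-sided estimate on $c_B^2$) to make the sufficiency half, and hence the ``if and only if,'' complete.
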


The proof of Theorem~\ref{Lu0} can be found in Section 10 of~\cite{App}.
\begin{rem}
In comparison to (\ref{w1}), condition (\ref{w2})
imposes an additional restriction on positive values of $s_t$ (i.e.,
large values of $\alpha$). It is clear from the proof of Theorem \ref
{Lu0} that the necessity of this additional requirement results from
the asymmetric roles of type I and type II errors in the Bayes risk, as
discussed in Remark~\ref{assymetry}.
\end{rem}
\begin{rem}
Condition (\ref{w1}), given in Theorem~\ref{Lu0},
says (after some algebra) that a sequence of asymptotically optimal
BFDR levels $\alpha=\alpha_t$ satisfies $\frac{\alpha}{1-\alpha}=
(\delta\sqrt{u})^{b_t-1} f^{b_t}$ for some $b_t$, where $b_t
\rightarrow0$ as $t \rightarrow\infty$. Broadly speaking, this means
that for optimality the BFDR levels need to be chosen small when the
loss ratio is large. The seemingly evident dependence of $\alpha$ on
$u$ is not stressed in this article, since on the verge of
detectability $u=\frac{2}{C} \log(f \delta)(1+o_t)$ and, as seen in
the following corollaries, the range of asymptotically optimal levels
of $\alpha$ does not depend on $C$. A thorough discussion of the
dependence of $\alpha$ on $u$ in case when $C=0$ can be found in~\cite{BCFG}.
\end{rem}
\begin{cor}\label{Lu1}
A rule with BFDR at the level $\alpha=\alpha_t$, such that $r_{\alpha
}\propto(\delta\sqrt{u})^{-1}$, is ABOS.
Specifically, if $m\rightarrow\infty$, $p\propto m^{-\beta}$ ($\beta
>0$) and $\frac{\log\delta}{\log m} \rightarrow C_{\delta}\in
[0,\infty
]$, then a rule with BFDR at the level $\alpha$ such that $r_{\alpha}
\propto(\delta\sqrt{\log(m\delta)})^{-1}$ is ABOS.
\end{cor}
\begin{rem}\label{rem_ind}
Corollary~\ref{Lu1} shows that while the proposed optimal BFDR
level clearly depends on the ratio of losses $\delta$, it is
independent of the sparsity parameter $\beta$.
\end{rem}

The proof of Corollary~\ref{Lu1} is immediate by verifying that (\ref
{w1}) and (\ref{w2}) are satisfied by such sequences of $\alpha$'s. Also
the proofs of the following Corollaries~\ref{Lu3} and~\ref{Lu4},
follow quite immediately from Theorem~\ref{Lu0} and are thus omitted.
\begin{cor}\label{Lu3}
Assume the generic situation (\ref{bound_logdelta}) of Remark \ref
{Rem_gen}. Then a fixed threshold rule with BFDR equal to $\alpha$ is
ABOS if and only if $r_{\alpha}$ satisfies
\[
\log r_{\alpha}=o(\log p) \quad\mbox{and}\quad r_{\alpha}\delta\rightarrow0 .
\]
If we assume further that $m\rightarrow\infty$ and $p \propto
m^{-\beta
}$ ($\beta>0$), such a rule is ABOS if and only if
\[
\log r_{\alpha}=o(\log m) \quad\mbox{and}\quad r_{\alpha}\delta\rightarrow0 .
\]
In case when $\delta=\mathrm{const}$ and $p \propto m^{-\beta}$, the BFDR rule
is ABOS if and only if
$\alpha\rightarrow0$ such that $ \log\alpha=o(\log m)$.
\end{cor}
\begin{cor}\label{Lu4}
A fixed threshold rule with BFDR equal to $\alpha\in(0,1)$ is ABOS if
and only if $\delta\rightarrow0$ at such a rate that $\frac{\log
\delta
}{\log p} \rightarrow0$. If we assume that $m\rightarrow\infty$ and
$p \propto m^{-\beta}$ ($\beta>0$), such a rule is ABOS if and only if
$\delta\rightarrow0$ such that $\log\delta=o(\log m)$.
\end{cor}

Corollary~\ref{Lu4}, given above, states that a rule with BFDR at a
fixed level $\alpha$ is asymptotically optimal for a wide range of loss\vadjust{\goodbreak}
functions, such that $\delta\rightarrow0$. Note that the assumption
that $\delta\rightarrow0$ as $p\rightarrow0$ agrees with the
intuition that the cost of missing a signal should be relatively large
if the true number of signals is small. Corollary~\ref{Lu3} shows that
when the loss ratio is constant, a BFDR rule is asymptotically optimal
for a wide range of $\alpha$ levels, such that $\alpha\rightarrow0$.


\subsection{Optimality of the asymptotic approximation to the BH threshold}

In~\cite{GW1} it is proved that when the number of tests tends to
infinity, and the fraction of true alternatives remains fixed, then the
random threshold of the Benjamini--Hochberg procedure can be
approximated by
%
\begin{equation}\label{GW}
c_{\mathrm{GW}}\dvtx \frac{(1-\Phi(c_{\mathrm{GW}}))}{(1-p)(1-\Phi(c_{\mathrm{GW}}))+p(1-\Phi
({c_{\mathrm{GW}}}/{\sqrt{u+1}}))}=\alpha.
\end{equation}

Compared to the equation defining the BFDR rule (\ref{BFDR1}), the
function on the left-hand side of (\ref{GW}) lacks $(1-p)$ in the
numerator. In the case where $p\rightarrow0$ this term is negligible,
and one expects that the rule based on $c_{\mathrm{GW}}$ asymptotically
approximates the corresponding BFDR rule for the same $\alpha$. The
following result shows that this is indeed the case.
\begin{thr}\label{Lu6}
Consider the rule rejecting the null hypothesis $H_{0i}$ if $\frac
{X_i^2}{\sigma^2}\geq c^2_{\mathrm{GW}}$, where $c_{\mathrm{GW}}$ is defined in (\ref
{GW}). This rule is ABOS if and only if the corresponding BFDR rule
defined in (\ref{BFDR1})
is ABOS. In this case we have
\[
c^2_{\mathrm{GW}}=c^2_{B}+o_t ,
\]
where $c_{B}^2$ is the threshold of an asymptotically optimal BFDR
rule, defined in Theorem~\ref{Lu0}.
\end{thr}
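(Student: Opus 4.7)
My approach is to recognize that $c_{GW}$ is itself a BFDR-controlling threshold $c_B$, but evaluated at a slightly perturbed level $\alpha(1-p)$ rather than $\alpha$, and then to invoke Theorem \ref{Lu0} twice.

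First, I would rewrite the defining equation \eqref{GW}. Clearing the denominator gives
$$1-\Phi(c_{GW}) = \alpha\left[(1-p)(1-\Phi(c_{GW})) + p\left(1-\Phi\left(c_{GW}/\sqrt{u+1}\right)\right)\right].$$
Multiplying both sides by $(1-p)$ yields exactly equation \eqref{BFDR1} with $\alpha$ replaced by $\alpha':=\alpha(1-p)$. Hence $c_{GW}$ coincides with the BFDR-threshold at level $\alpha'$, and from this identification the rest of the argument is just a perturbation analysis of Theorem \ref{Lu0}.

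Second, I would show that the optimality conditions \eqref{w1} and \eqref{w2} hold at the level $\alpha'$ if and only if they hold at the level $\alpha$. With $r_\alpha = \alpha/(1-\alpha)$, a direct calculation gives
$$\frac{r_{\alpha(1-p)}}{r_\alpha} \;=\; \frac{(1-p)(1-\alpha)}{1-\alpha+\alpha p} \;=\; 1+O(p),$$
as long as $\alpha$ stays bounded away from $1$ (which is implicit whenever BFDR-control is meaningful, since the BFDR is bounded above by $1-p$ by Lemma \ref{LBH12}). Hence $\log(f/r_{\alpha'}) = \log(f/r_\alpha) + O(p) = \log(f/r_\alpha) + o_t$. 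Since $f = (1-p)/p \to \infty$ forces $\log(f/r_\alpha)\to\infty$ under either set of optimality conditions, the ratios in \eqref{w1} and the quantities in \eqref{w2} change by amounts that are negligible compared with themselves; more precisely, $s_t^{(GW)} - s_t = o(1)$ and the left-hand side of \eqref{w2} for $\alpha'$ agrees with that for $\alpha$ up to $o(1)$. Applying Theorem \ref{Lu0} to both levels yields the iff assertion.

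Third, under this common optimality, the asymptotic expansion \eqref{asbfdr} gives
$$c_B^2 = 2\log(f/r_\alpha) - \log\bigl(2\log(f/r_\alpha)\bigr) + C_1 + o_t,$$
and the same formula for $c_{GW}^2$ with $r_\alpha$ replaced by $r_{\alpha(1-p)}$. Subtracting, the leading terms differ by $2\log(r_\alpha/r_{\alpha(1-p)}) = O(p) = o_t$, while the double-log corrections differ by $\log\bigl[\log(f/r_{\alpha(1-p)})/\log(f/r_\alpha)\bigr] = o_t$ by the same perturbation bound used above. Therefore $c_{GW}^2 = c_B^2 + o_t$, which completes the proof.

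The main obstacle is the bookkeeping in the second step: one must confirm that the $O(p)$ shift between $r_\alpha$ and $r_{\alpha'}$ is genuinely negligible on both scales that appear in \eqref{w1} and \eqref{w2}, and in particular that $\log(f/r_\alpha)\to\infty$ in the relevant asymptotic regime. Everything else is a routine telescoping using the explicit expansion \eqref{asbfdr}.
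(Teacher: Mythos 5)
Your proposal follows essentially the same route as the paper's proof: rewrite (\ref{GW}) to identify $c_{GW}$ with the BFDR threshold (\ref{BFDR1}) at the perturbed level $\alpha'=\alpha(1-p)$, transfer the optimality conditions (\ref{w1})--(\ref{w2}) between $\alpha$ and $\alpha'$, and then compare the two expansions (\ref{asbfdr}) to conclude $c_{GW}^2=c_B^2+o_t$. The one weak point is your justification for treating $\alpha$ as bounded away from $1$: Lemma \ref{LBH12} only bounds the BFDR by $1-p$, and since $p\to 0$ this does not keep a level sequence $\alpha_t$ away from $1$, so as written your perturbation bound $r_{\alpha'}/r_{\alpha}=1+O(p)$ does not cover all admissible sequences. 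The gap closes easily, and this is in effect what the paper does: if either rule is optimal, Theorem \ref{Lu0} (via condition (\ref{w1}) and $\log v\to\infty$) forces $f/r_{\alpha}\to\infty$, respectively $f/r_{\alpha'}\to\infty$, and since $r_{\alpha'}/f=pr_{\alpha}/(1+pr_{\alpha})$ either of these gives $pr_{\alpha}\to 0$; the exact identity $r_{\alpha'}/r_{\alpha}=(1-p)/(1+pr_{\alpha})\to 1$ then yields $\log(f/r_{\alpha'})=\log(f/r_{\alpha})+o_t$ without any extra assumption on $\alpha$, after which your transfer of (\ref{w1}), (\ref{w2}) and the telescoping of the expansions go through unchanged (and when $pr_{\alpha}\not\to 0$ neither rule is optimal, so the equivalence holds trivially in that regime).
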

\begin{pf}
Note that (\ref{GW}) is equivalent to
%
\begin{equation}\label{GWnew}
\frac{1-\Phi(c_{\mathrm{GW}})}{1-\Phi({c_{\mathrm{GW}}}/{\sqrt{u+1}}
)}=\frac{p r_{\alpha}}{1+p r_{\alpha}} = \frac{r_{\alpha^{\prime}}}{f},
\end{equation}
where $\alpha^{\prime}=\alpha(1-p)$. Thus $c_{\mathrm{GW}}$ is the same as the
threshold of a rule with BFDR at the level $\alpha^{\prime}$.

Define $s_{t^{\prime}}$ by $\frac{\log(f \delta\sqrt{u})}{\log
(f/r_{\alpha^{\prime}})} = 1+s_{t^{\prime}}$. It follows easily that
$s_{t^{\prime}}$ satisfies (\ref{w1}) and (\ref{w2}) of Theorem \ref
{Lu0} (with $\alpha$ replaced by $\alpha^{\prime}$), if and only if
$s_t$ defined in (\ref{w0}) satisfies (\ref{w1}) and (\ref{w2}). Thus
the first part of the theorem is proved.

To complete the proof of the theorem, we observe that the optimality of
a BFDR rule implies that $\frac{r_{\alpha}}{f} \rightarrow0$, and the
optimality of the rule based on $c_{\mathrm{GW}}$ implies that $\frac{r_{\alpha
^{\prime}}}{f} \rightarrow0$. In either case, $pr_{\alpha
}\rightarrow
0$ and thus (\ref{GWnew}) reduces to
%
\begin{equation}\label{cgw}
\frac{1-\Phi(c_{\mathrm{GW}})}{1-\Phi({c_{\mathrm{GW}}}/{\sqrt{u+1}})}=p
r_{\alpha}(1+o_t)=\frac{r_{\alpha}}{f}(1+o_t) .
\end{equation}
Now, the asymptotic approximation to $c^2_{\mathrm{GW}}$ can be obtained
analogously to the asymptotic form of the threshold for an optimal BFDR
rule, provided in (\ref{asbfdr}).
\end{pf}

\section{ABOS of classical frequentist multiple testing
procedures}\label{Section:rules}

Similarly to the Bayes oracle, the BFDR rules discussed in Section~\ref{Section:BFDR}
are not attainable, since they require the knowledge of the parameters
of the mixture distribution (\ref{modelX}). However, the results
included in Section~\ref{Section:BFDR} can be used to prove the asymptotic optimality of
classical multiple testing procedures, such as the Bonferroni rule and
the Ben\-ja\-mi\-ni--Hoch\-berg procedure (BH). In this section we
consider a sequence of problems in which the number of tests $m
\rightarrow\infty$ and the $\gamma$ sequence is indexed by $t=m$.

\subsection{ABOS of the Bonferroni correction}

The Bonferroni correction is one of the oldest and most popular
multiple testing rules. It is aimed at controlling the Family Wise
Error Rate, $\mathrm{FWER} =P(V>0)$, where $V$ is the number of false
discoveries. The Bonferroni correction at FWER level $\alpha$ rejects
all null hypothesis for which $Z_i=\frac{|X_i|}{\sigma}$ exceeds the threshold
\[
c_{\mathrm{Bon}}\dvtx 1-\Phi(c_{\mathrm{Bon}})=\frac{\alpha}{2m} .
\]

Under the assumption that $m\rightarrow\infty$, the threshold for the
Bonferroni correction can be written as
%
\begin{equation}\label{cbon}
c^2_{\mathrm{Bon}}=2\log\biggl(\frac{m}{\alpha}\biggr)-\log\biggl(2\log
\biggl(\frac
{m}{\alpha}\biggr)\biggr)+\log(2/ \pi)+o_m .
\end{equation}

Comparison of this threshold with the asymptotic approximation to an
optimal BFDR rule (\ref{asbfdr}) suggests that the Bonferroni
correction will have similar asymptotic optimality properties in the
extremely sparse case (\ref{sparse}). Indeed, these expectations are
confirmed by the following Lemma~\ref{Bon}, which will be used in the
next section for the proof of ABOS of the Benjamini--Hochberg procedure
under very sparse signals.
\begin{lem}\label{Bon}
Assume that $m \rightarrow\infty$ and (\ref{sparse}) holds.
The Bonferroni procedure at FWER level $\alpha_m \rightarrow\alpha
_{\infty}\in[0,1)$ is ABOS if $\alpha_m$ satisfies the assumptions of
Theorem~\ref{Lu0}.
\end{lem}
\begin{pf}
Under the assumptions of Lemma
\ref{Bon} and Theorem~\ref{Lu0}
\[
c^2_{\mathrm{Bon}}=c^2_B+2\log z_m -2 \log(1-\alpha_{\infty})+2 \log{D}+ o_m ,
\]
where $z_m=mp_m$, $D=2(1-\Phi(\sqrt{C}))$, and $c_{B}^2$ is the
threshold of the rule controlling the BFDR at level $\alpha_m$.
From (\ref{sparse}) it follows easily that $c^2_{\mathrm{Bon}} = c^2_{B}(1+o_m)$.
By assumption, the rule based on the threshold $c^2_{B}$ is optimal,
and hence $c^2_{\mathrm{Bon}}$ satisfies condition (\ref{con1}) of Theorem \ref
{main}. Condition (\ref{popr}) is satisfied, since by assumption $\log
z_m$ is bounded below for sufficiently large $m$ and thus ABOS of the
Bonferroni correction follows.
\end{pf}

\subsection{ABOS of BH}\label{optBH}

Let $Z_i=|\frac{X_i}{\sigma}|$ and $p_i=2(1-\Phi(Z_i))$ be
the corresponding $p$-value.
We sort $p$-values in ascending order $p_{(1)}\leq p_{(2)}\leq\cdots
\leq p_{(m)}$
and denote
%
\begin{equation}\label{BH}
k=\max\biggl\{i\dvtx p_{(i)}\leq\frac{i \alpha}{m}\biggr\}.
\end{equation}
The Benjamini--Hochberg procedure BH at FDR level $\alpha$ rejects all
the null hypotheses for which the corresponding $p$-values are
smaller than or equal to $p_{(k)}$.
\begin{rem}
BH gained large popularity after the seminal paper
\cite{BH}, where it was proved that it controls FDR. It was originally
proposed in~\cite{S}, and later used in~\cite{Sm} as a test for the
global null hypothesis.
\end{rem}

Let us denote $1-\hat F_m(y) = \# \{|Z_i|\geq y\}/m$. It is easy to
check (see, e.g., (2.2) of~\cite{Sen} or the equivalence theorem of
\cite{ET}) that the\vspace*{1pt} Benjamini--Hochberg procedure rejects the null
hypothesis $H_{0i}$ when $Z_i^2\geq\tilde c^2_{\mathrm{BH}}$, where
%
\begin{equation}\label{cBH}
\tilde c_{\mathrm{BH}}=\inf\biggl\{y\dvtx \frac{2(1-\Phi(y))}{1-\hat F_m(y)}\leq
\alpha\biggr\}.
\end{equation}

Note also that BH rejects the null hypothesis $H_{0i}$ whenever $Z_i^2$
exceeds the threshold of the Bonferroni correction. Therefore, we
define the random threshold for BH as
\[
c_{\mathrm{BH}}=\min\{c_{\mathrm{Bon}}, \tilde c_{\mathrm{BH}}\} .
\]

Comparing (\ref{cBH}) and (\ref{GW}), we observe that the difference
between $\tilde c_{\mathrm{BH}}$ and $c_{\mathrm{GW}}$ is in replacing the cumulative
distribution function of $|Z_i|$ [appearing in (\ref{GW})] by the
empirical distribution function (in~\ref{cBH}). Therefore, as shown in
\cite{GW1}, for any fixed mixture distribution (\ref{modelX}) $\tilde
c_{\mathrm{BH}}$ converges in probability to $c_{\mathrm{GW}}$ as $m\rightarrow\infty$.
The following Theorem~\ref{BHthreshold}, shows that the approximation
of $\tilde c_{\mathrm{BH}}$ by $c_{\mathrm{GW}}$ works also within our asymptotic
framework, where $p_m\rightarrow0$ and $c_{\mathrm{GW}}\rightarrow\infty$.
\begin{thr}\label{BHthreshold}
Assume that $p_m\rightarrow0$ such that for sufficiently large $m$
%
\begin{equation}\label{pm2}
p_m>\frac{log^{\beta_p}m}{m}\qquad \mbox{for some constant } \beta_p>1 .
\end{equation}
Moreover, assume that the sequence of FDR levels $\alpha_m$ satisfies
%
\begin{equation}\label{alphauf}
\alpha_m \rightarrow\alpha_{\infty}<1
\end{equation}
and
%
\begin{equation}
\label{alpha2}
\alpha_m \mbox{ satisfies the assumption of Theorem~\ref{Lu0}}.
\end{equation}
Then
for every $\varepsilon>0$, every constant $\beta_1 > 0$ and sufficiently
large $m$ (dependent on $\varepsilon$ and $\beta_1$)
\[
P(|c_{\mathrm{BH}}-c_{\mathrm{GW}}|>\varepsilon)\leq m^{-\beta_1}.\vadjust{\goodbreak}
\]
\end{thr}

The proof of Theorem~\ref{BHthreshold} is provided in Section 11 of
\cite{App}.

Theorem~\ref{BHthreshold} suggests asymptotic optimality of BH under a
relatively ``dense'' scenario, specified in assumption (\ref{pm2}).
Indeed, the following Theorem~\ref{BHthr}, shows asymptotic optimality
of BH and
extends the ``optimality'' range of the sparsity parameter to all
sequences $p_m$ such that $mp_m \rightarrow s \in(0,\infty]$.
Concerning type I error component of the risk, this extension was
possible due to the precise and powerful results of~\cite{FR} on the
expected number of false discoveries using BH under the total null
hypothesis. The optimality of the type II error component under the
extremely sparse scenario (\ref{sparse}) results directly from a
comparison with the Bonferroni correction and Lemma~\ref{Bon}.\vspace*{-2pt}
\begin{thr}\label{BHthr}
Assume that
%
\begin{equation}\label{pm}
m \rightarrow\infty,\qquad p_m\rightarrow0,\qquad m p_m\rightarrow s \in
(0,\infty] .
\end{equation}
Then BH at the FDR level $\alpha=\alpha_m$ is ABOS if (\ref{alphauf})
and (\ref{alpha2}) hold.\vspace*{-2pt}
\end{thr}

The proof of Theorem~\ref{BHthr} is provided in Section~\ref{App}.\vspace*{-2pt}
\begin{rem}
Theorem~\ref{BHthr} states that under the sparsity
assumption (\ref{pm}), BH behaves similarly to a BFDR control rule.
Specifically, if assumptions (\ref{alphauf}) and (\ref{pm}) are
satisfied, then the BH rule is ABOS under FDR-levels $\alpha\propto
(\delta\sqrt{u})^{-1}$, as in Corollary~\ref{Lu1}. Furthermore, if
$p\propto m^{-\beta}$, with $0<\beta\leq1$, $\frac{\log\delta
}{\log
m} \rightarrow C_{\delta}\in[0,\infty]$ and $\delta\sqrt{\log
(m\delta
)} \rightarrow\infty$, then a\vspace*{1pt} rule with FDR at the level $\alpha$ such
that $\alpha\propto(\delta\sqrt{\log(m\delta)})^{-1}$ is ABOS.
Also,\vspace*{1pt} in the case when $p\propto m^{-\beta}$ ($0<\beta\leq1$) and
$\delta\propto\frac{1}{\sqrt{\log m}}, $ then BH at a fixed FDR level
$\alpha\in(0,1)$ is ABOS. Thus, while the asymptotically optimal FDR
levels clearly depend on the ratio of losses $\delta$, they are
independent of the sparsity parameter $\beta$; that is, ABOS property
of BH is highly adaptive with respect to the level of sparsity.\vspace*{-2pt}
\end{rem}

The next Theorem~\ref{lemRIC1}, deals with optimality of BH under the
generic assumption (\ref{bound_logdelta}) which here has the form
$\log
\delta=o(\log m)$.\vspace*{-2pt}
\begin{thr}\label{lemRIC1}
Suppose $m\rightarrow\infty$ and $p \propto m^{-\beta}$, with
$0<\beta\leq1$. Moreover, assume that $\log\delta=o(\log m)$ and
$\alpha\rightarrow\alpha_{\infty}<1$. Then BH is ABOS if
\[
\log\alpha=o(\log m) \quad\mbox{and}\quad \alpha\delta\rightarrow0 .\vspace*{-2pt}
\]
\end{thr}
\begin{pf}
Given the assumptions we are in the situation of Corollary~\ref{Lu3},
and it is easy to verify that therefore all assumptions of
Theorem~\ref{BHthr} are fulfilled. Thus ABOS holds.\vspace*{-2pt}
\end{pf}
\begin{cor} Suppose $m\rightarrow\infty$ and $p \propto m^{-\beta}$, with
$0<\beta\leq1$. Moreover, assume that $\delta=\mathrm{const}$. Then BH is ABOS
if $\alpha$ converges to 0, such that $\log\alpha=o(\log m)$.\vadjust{\goodbreak}
\end{cor}
\begin{cor} Suppose $m\rightarrow\infty$ and $p \propto m^{-\beta}$, with
$0<\beta\leq1$. Moreover, assume that $\alpha=\mathrm{const}$. Then BH is ABOS
if $\delta$ converges to 0, such that $\log\delta=o(\log m)$.
\end{cor}


Theorem~\ref{BHthr}, Remark 5.2, Theorem~\ref{lemRIC1} and its
corollaries give some general suggestions on the choice of the optimal
FDR level for BH. Note, however, that according to Theorem~\ref{main},
BH can be asymptotically optimal even when the difference between its
asymptotic threshold $c_{\mathrm{GW}}$ and the threshold of the Bayes oracle
slowly diverges to infinity. The following lemma provides a more
specific condition on $\alpha$ and $\delta$, which guarantees that the
difference between $c_{\mathrm{GW}}$ and the threshold of the Bayes oracle
converges to a constant.

\begin{lem}\label{crit_diff}
Let $p_m \propto m^{-\beta}$, for some $\beta> 0$. Moreover, assume
that $\delta$ satisfies the generic assumption (\ref{bound_logdelta})
and that $\alpha$ satisfies the assumptions of Theorem~\ref{Lu0}. Then
the difference between the asymptotic approximation to the BH threshold
$c_{\mathrm{GW}}$ (\ref{GW}) and the threshold of the Bayes oracle (\ref{crit})
converges to a constant if and only if the FDR level $\alpha_m$ and the
ratio of loss functions $\delta_m$ satisfy the condition
%
\begin{equation}\label{concrit}
r_{\alpha_m} \delta_m=\frac{s_m}{\log m} ,
\end{equation}
where $r_{\alpha_m}=\frac{\alpha_m}{1-\alpha_m}$ and $s_m
\rightarrow
C_s \in(0,\infty)$.
\end{lem}
\begin{pf}
Straightforward algebra shows that the difference between the
threshold of the Bayes oracle and $c_{\mathrm{GW}}$ is equal to
\[
2 \log\log m + 2 \log(\delta_m r_{\alpha_m}) + \log(2 \beta/C) +
\log
(2 \beta) + C- C_1 + o_m ,
\]
where $C_1$ is the constant provided in (\ref{asbfdr}). From this Lemma
\ref{crit_diff} follows easily.
\end{pf}
\begin{rem}
Theorem~\ref{BHthreshold} states that if $\beta\in(0,1)$, then the
random threshold of BH can be well approximated by $c_{\mathrm{GW}}$. Therefore,
in this case Lemma~\ref{crit_diff} provides also the ``best''
asymptotically optimal choices of FDR levels for BH. Since under the
assumptions of Theorem~\ref{BHthreshold} $\alpha_m$ converges to a
constant smaller than one, condition (\ref{concrit}) can be written as
$\alpha_m \delta_m \propto(\log m)^{-1}$. Specifically, if $\delta
_m=\mathrm{const}$, then the sequence of best FDR levels should satisfy $\alpha
_m \propto(\log m)^{-1}$. Thus the choice $\alpha_m \propto(\log
m)^{-1}$ is recommended when one aims at minimizing the
misclassification rate. On the other hand, BH with the fixed FDR level
$\alpha\in(0,1)$ works particularly well if $\delta_m \propto(\log
m)^{-1}$.
\end{rem}

\section{Discussion}\label{Disc}

We have investigated the asymptotic optimality of multiple testing
rules under sparsity, using the framework of Bayesian decision theory.
We formulated conditions for the asymptotic optimality of the universal
threshold of~\cite{DJ1} and the Bonferroni correction. Moreover,
similarly to~\cite{A}, we have proved some asymptotic optimality\vadjust{\goodbreak}
properties of rules controlling the Bayesian False Discovery Rate and
the Benjamini and Hochberg procedure. Comparing with~\cite{A}, we
replaced a loss function based on estimation error with a loss function
dependent only on the type of testing error. This resulted in somewhat
different optimality properties of BH. Specifically, we have shown that
the optimal FDR level for BH depends on the ratio between the loss for
type I and type II errors and is almost independent of the level of
sparsity. Within our chosen asymptotic framework BH with the FDR levels
chosen in accordance with the assumed loss function is asymptotically
optimal in the entire range of sparsity parameters $p$, such that
$p\rightarrow0$ and $mp\rightarrow s \in(0, \infty]$. This range of
values of $p$ covers the situation when $p\propto1/m$, and in this way
it substantially extends the range of sparsity levels under which the
asymptotic minimax properties of BH were proved in~\cite{A}.

In this paper we proposed a new asymptotic framework to analyze
properties of multiple testing procedures. According to our definition
a multiple testing rule is ABOS if the ratio of its risk to the risk of
the Bayes oracle converges to 1 as the number of tests increases to
infinity. Our asymptotic results are to a large extend based on exact
inequalities for finite values of $m$. The refined versions of these
inequalities can be further used to characterize the rates of
convergence of the ratio of risks to 1 and to compare ``efficiency'' of
different ABOS methods. We consider this as an interesting area for
further research.

The results reported in this paper provide sufficient conditions for
the asymptotically optimal FDR levels for BH. They leave, however, a
lot of freedom in the choice of proportionality constants, which
obviously play a large role for a given finite value of $m$.
Based on the properties of BFDR controlling rules we expect that for
any given $m$ there exists FDR level $\alpha$ such that the risk of BH
is equal to the risk of the Bayes oracle. This finite sample optimal
choice of $\alpha$ would depend on the actual values of the mixture
parameters $p$ and $u$.
In recent years many Bayesian and empirical Bayes methods for multiple
testing have been proposed, which provide a natural way of
approximating the Bayes oracle in the case where the parameters of the
mixture distribution are unknown. The advantages of these Bayesian
methods, both in parametric and nonparametric settings, were
illustrated in, for example,~\cite{S3,E,BGOT,BGT,SB2}.
In~\cite{BGT} it is shown that when $p$ is \textit{moderately small} both
fully Bayesian and empirical Bayes methods perform very well with
respect to the Bayes risk. However, analysis of the asymptotic
properties of fully Bayesian methods in the case where $p_m\rightarrow
0$ remains a challenging task. In the case of empirical Bayes methods,
the asymptotic results given in~\cite{CJL}
illustrate that consistent estimation of the mixture parameters is
possible when $p_m \propto m^{-\beta}$, with $\beta\in(0,1)$. New
results on the convergence rates of these estimates, presented in~\cite
{CJ}, raise some hopes that proofs of the optimality properties of the
corresponding empirical Bayes rules can be found. It is, however,
rather unclear whether the full or empirical Bayes methods can be
asymptotically optimal in the extremely sparse case of $p_m \propto m^{-1}$.
Note that in this situation the expected number of signals does not
increase when $m\rightarrow\infty$ and consistent estimation of the
alternative distribution is not possible. These doubts, regarding the
asymptotic optimality of Bayesian procedures in the extremely sparse
case, are partially confirmed by the simulation study in~\cite{BGT},
where for very small $p$ Bayesian methods are outperformed by BH and
the Bonferroni correction at the traditional FDR and FWER levels
$\alpha=0.05$.

The Benjamini--Hochberg procedure can only be directly applied when the
distribution under the null hypothesis is completely
specified, that is, when $\sigma$ is known. In the case of testing a
simple null hypothesis (i.e., when $\sigma_0=0$), $\sigma$ can be
estimated using replicates. The precision of this estimation depends on
the number of replicates and can be arbitrarily good.
In the case where $\sigma_0>0$ (i.e., when we want to distinguish large
signals from background noise), the situation is quite
different. In this case, $\sigma$ can only be estimated by pooling the
information from all the test statistics. The related modifications of
the maximum likelihood method for estimating parameters in the sparse
mixture (\ref{modelX}) are discussed in~\cite{BGT}. More sophisticated
methods for estimating parameters of the normal null distribution in
case of no parametric assumptions on the form of the alternative are
provided in~\cite{E1} and~\cite{JinCai}. In~\cite{CJ} it is proved that
for $\beta<1/2$ the proposed estimators based on the empirical
characteristic function are minimax rate optimal. Simulation results
reported in~\cite{BGT} show that in the parametric setting of (\ref
{modelX}) and for very small $p$, the plug-in versions of BH at FDR
level $\alpha=0.05$ outperform Bayesian approximations to the oracle.
We believe that this is due to the fact that BH does not require the
estimation of $p$, which is rather difficult when $p$ is very small.
Despite this relatively good behavior of BH, it is rather unlikely that
the plug-in versions of BH are asymptotically optimal in the case where
$p\propto m^{-1}$. A thorough theoretical comparison of empirical Bayes
versions of BH with Bayesian approximations to the Bayes oracle and an
analysis of their asymptotic optimality remains an interesting problem
for future research.

Model (\ref{modelX}) assumes that the statistics for different tests
are independent. In principle, the model and the methods proposed in
this paper can be extended to cover the situation of dependent test
statistics. However, in that case the optimal Bayes solution for the
compound decision problem will be more difficult to obtain. In
particular the optimal Bayes classifier for the $i$th test may depend
on the values of all other test statistics, leading to a rather
complicated Bayes oracle. We believe that under specific dependency
structures BH may still retain its asymptotic optimality properties.
The detailed analysis of this problem requires a thorough new
investigation and remains an open problem for future research.

In this paper we have modeled the test statistics using a scale mixture
of normal distributions. As already mentioned, we believe that the main
conclusions of the paper will hold for a substantially larger family of
two component mixtures, which are currently often applied to multiple
testing problems (see, e.g., \mbox{\cite{E,E1,CJ}}).
In a recent article~\cite{CPS}, a new ``continuous'' one-group model
for multiple testing was proposed. As in our case, the test statistics
are assumed to have a normal distribution with mean equal to zero, but
the scale parameters are different for different tests and modeled as
independent random variables from the one-sided Cauchy distribution. As
discussed in~\cite{CPS}, the resulting Bayesian estimate of the vector
of means shrinks small effects strongly toward zero and leaves large
effects almost intact. In this way, it enables very good separation of
large signals from background noise. In~\cite{CPS} it is demonstrated
that the results from the proposed procedure for multiple testing often
agree with the results from Bayesian methods based on the two-group
model. A thorough analysis of the asymptotic properties of the method
proposed in~\cite{CPS} in the context of multiple testing remains a
challenging task. However, we believe that the suggested one-group
model has its own, very interesting virtues and Carvalho, Polson and
Scott~\cite{CPS} clearly demonstrate that the search for modeling
strategies for the problem of multiple testing, as well as for the most
meaningful optimality criteria, is still an open and active area of research.

\section{\texorpdfstring{Proof of Theorem \protect\ref{BHthr}}{Proof of Theorem 5.2}}\label{App}

The proof of Theorem~\ref{BHthr} consists of two parts. The first part
shows the optimality of the type I error component of the risk (see
Theorem~\ref{type1_bounb}) while the second part shows that of the type
II error component (see Theorem~\ref{BH_type2_dense}). Combining these
two facts, the result follows immediately. The proofs of Theorems \ref
{type1_bounb} and~\ref{BH_type2_dense} are based on a series of
intermediate results.

\subsection{Bound on the type \textup{I} error component of the risk}

The first and most essential step of the proof of the optimality of the
type I error component of the risk relies on showing that, under
certain conditions, the expected number of false discoveries of BH, $\mathit{E
V}$, is bounded by $c_v\alpha K$, where $\alpha$ is the FDR level, $K$
is the true number of signals and $c_v$ is a positive constant. This
result is very intuitive in view of the definition of FDR [see (\ref
{FDR})]. The proof is, however, nontrivial, due to the difference
between $E(\frac{V}{R})$ and $\frac{\mathit{E V}}{E R}$.
\begin{lem} \label{cond_bound}
Consider the BH rule at a fixed FDR level $\alpha\leq\alpha_0 < 1$.
Let $K$ be the number of true signals. The
conditional expected number of false rejections given that $K=k$, with
$k < m(\frac{1}{\alpha_0}-1)$, is bounded by
%
\begin{equation}\label{cond_exp_FP}
E(V|K=k) \leq\alpha\biggl( \frac{k}{1-\alpha} + \frac{1}{(1-\alpha
)^2}\biggr) .
\end{equation}
Specifically, for $1\leq k< m(\frac{1}{\alpha_0}-1) $
%
\begin{equation}\label{cond_exp_FP1}
E(V|K=k) \leq c_v \alpha k
\end{equation}
with
%
\begin{equation}\label{cvfinal}
c_v = \frac{2 - \alpha_0}{(1 - \alpha_0)^2} .
\end{equation}
\end{lem}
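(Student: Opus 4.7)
The plan is to combine a precise bound of Finner and Roters \cite{FR} on the expected number of rejections of step-up procedures under the total null with a short self-consistency argument. As the text preceding the lemma notes, the chief difficulty is that the elementary FDR-control statement $E(V/R) \leq \alpha$ does not directly deliver a bound on $E(V)$, because $E(V/R)$ and $E(V)/E(R)$ are not comparable in general. The strategy is instead to establish an inequality of the form $E(V \mid K=k) \leq \alpha\, E(R \mid K=k) + C_\alpha$ with a small additive correction, and then to close the loop using the elementary inequality $R \leq V + k$.

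To carry this out, I would first condition on $K=k$ and on the $k$ alternative p-values, and observe by monotonicity that $E(V \mid K=k)$ is no larger than the expected number of false rejections under the modified procedure in which the $k$ alternative p-values are replaced by $0$. Replacing any alternative p-value by $0$ can only increase the BH rejection count $R$, hence the threshold $R\alpha/m$, hence the number of null p-values below the threshold. With the $k$ zeros in place, BH reduces to a step-up procedure on the $m-k$ iid Uniform$(0,1)$ null p-values with critical values $c_j = (k+j)\alpha/m$; this is precisely the type of step-up procedure whose expected number of rejections under the total null is the object of \cite{FR}. Their estimates, after some algebra, yield
\begin{equation*}
E(V \mid K=k) \;\leq\; \alpha\, E(R \mid K=k) \;+\; \frac{\alpha}{1-\alpha}.
\end{equation*}

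Using $R = V + S$ with $S \leq k$ gives $E(R \mid K=k) \leq E(V \mid K=k) + k$, so substituting into the previous display produces a self-referential inequality for $E(V \mid K=k)$ which, when solved, yields the first bound
\begin{equation*}
E(V \mid K=k) \;\leq\; \frac{\alpha k}{1-\alpha} + \frac{\alpha}{(1-\alpha)^2}.
\end{equation*}
The hypothesis $k < m(1/\alpha_0 - 1)$ keeps the solution within the admissible range $V \leq m-k$, and $\alpha \leq \alpha_0 < 1$ keeps the denominator $1-\alpha$ bounded away from zero uniformly in $\alpha$. For the second claim, the additional hypothesis $k \geq 1$ lets me absorb the constant correction into $\alpha k /(1-\alpha)^2$, giving
\begin{equation*}
E(V \mid K=k) \;\leq\; \alpha k \cdot \frac{2 - \alpha}{(1-\alpha)^2}.
\end{equation*}
A one-line derivative check shows $\alpha \mapsto (2-\alpha)/(1-\alpha)^2$ is increasing on $[0,1)$, so replacing $\alpha$ by $\alpha_0$ yields $E(V \mid K=k) \leq c_v \alpha k$ with the advertised $c_v = (2-\alpha_0)/(1-\alpha_0)^2$.

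The main obstacle, as already flagged in the paragraph preceding the lemma, is the key inequality of the second paragraph above: extracting a clean affine bound $E(V) \leq \alpha E(R) + O(1)$ from the combinatorial identities of \cite{FR} is substantially more delicate than invoking the weaker statement $E(V/R) \leq \alpha$, and this is where the ``precise results'' of \cite{FR} on the expected number of type I errors are indispensable; the remaining steps are then purely algebraic.
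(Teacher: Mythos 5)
Your opening reduction is the same as the paper's: conditionally on $K=k$, replacing the $k$ alternative p-values by zero can only enlarge the set of rejected nulls, and the resulting procedure is a step-up rule on uniform null p-values with the shifted critical values $\alpha(i+k)/m$, i.e.\ the procedure (\ref{newBH}). The gap is in what you do next. The display $E(V\mid K=k)\le \alpha E(R\mid K=k)+\alpha/(1-\alpha)$, which carries all the quantitative content, is asserted to follow from \cite{FR} ``after some algebra'', with no algebra shown. If the $R$ in that display is the rejection count of the \emph{modified} procedure, then $R=V+k$ exactly (the $k$ zeros are always rejected), so your affine inequality is literally equivalent to the bound (\ref{cond_exp_FP}) being proved, and the subsequent ``self-consistency'' step via $R\le V+k$ is circular: the entire proof is hidden inside the unproved display. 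If instead $R$ is the rejection count of the \emph{original} BH rule, the claim does not follow from the total-null computations of \cite{FR}: the standard leave-one-out identity for step-up rules gives $E(V\mid K=k)=\frac{\alpha}{m}\sum E(R^{(i\to 0)}\mid K=k)$ (sum over true nulls, $R^{(i\to 0)}$ being the rejection count with $p_i$ set to $0$), and since $R^{(i\to 0)}\ge R$ with a possibly large cascade this yields a \emph{lower} bound $E(V\mid K=k)\ge \alpha\frac{m-k}{m}E(R\mid K=k)$; obtaining an upper bound of your affine form requires controlling the cascade, which is exactly the difficulty you yourself flagged and have not addressed.

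What the paper does at this point is bound $E(V\mid K=k)$ by the expected number of rejections $E(V_2)$ of the rule (\ref{newBH}) applied to $m$ i.i.d.\ uniform p-values and then invoke Lemma 4.2 of \cite{FR}, which gives the exact value $E(V_2)=\alpha\sum_{i=0}^{m-1}(k+i+1){m-1 \choose i}\,i!\,(\alpha/m)^i$; since ${m-1 \choose i}\,i!\,(\alpha/m)^i\le\alpha^i$, this is at most $\alpha\sum_{i\ge 0}(k+i+1)\alpha^i=\alpha\left(\frac{k}{1-\alpha}+\frac{1}{(1-\alpha)^2}\right)$, with no reference to $E(R)$ of the original rule. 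This also identifies the true role of the hypothesis $k<m(1/\alpha_0-1)$: it ensures the shifted critical values $\alpha(i+k)/m$ remain below $1$ for all indices, which is needed to apply Lemma 4.2 of \cite{FR} --- not, as you suggest, to ``keep the solution within the admissible range $V\le m-k$''. Your final algebraic steps (absorbing the constant term when $k\ge 1$ and using monotonicity of $(2-\alpha)/(1-\alpha)^2$ to pass to $\alpha_0$) are correct, but the central estimate must actually be supplied rather than delegated to an unspecified manipulation of \cite{FR}.
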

\begin{pf}
Given the condition $K=k$, there are $(m-k)$ true nulls. Let the
corresponding ordered $p$-values be $\tilde p_{(1)}\leq\cdots\leq\tilde
p_{(m-k)}$. Imagine that we apply to these $p$-values the following
procedure $\mathit{\tilde BH}_k$ which rejects the hypotheses whose $p$-values
are smaller than $ \tilde p_{(\tilde{k})}$, where
%
\begin{equation}\label{newBH}
\tilde{k}=\max\biggl\{i\dvtx \tilde p_{(i)}\leq\frac{\alpha(i+k)}{m}
\biggr\} .
\end{equation}
Let\vspace*{1pt} $\tilde{V}$ be the corresponding number of rejections. Then
$E(V|K=k) \leq E(\tilde{V})$, since the number of false rejections for
the original BH, $V$, is not larger than~$\tilde{V}$. Now, consider $m$
i.i.d. $p$-values $q_1,\ldots,q_m$ from the total null (i.e., each of
the $m$ nulls is true), which are independent of the given original
$p$-values. Let $\tilde q_{(1)} \leq\cdots\leq\tilde q_{(m-k)}$ be
the ordered values from the subsequence $q_1,\ldots,q_{m-k}$. Then
$\tilde q_{(1)}, \ldots, \tilde q_{(m-k)}$ and $\tilde p_{(1)}, \ldots
,\tilde p_{(m-k)}$ have exactly the same distribution. Let $V_1$ and
$V_2$ be the number of rejections of null when the procedure (\ref
{newBH}) is applied to the first $(m-k)$ or $m$ $q$'s, respectively.
Then $E(V|K=k) \leq E(\tilde{V}) = E(V_1) \leq E(V_2)$.

Now the bound on $k$ (see the assumption of Lemma~\ref{cond_bound})
guarantees that $\alpha(i + k) / m$ on the right-hand side of
(\ref
{newBH}) is smaller than 1 for all possible $i$. We can thus apply
Lemma 4.2 of~\cite{FR} directly, which yields
\[
E(V_2) = \alpha\sum_{i=0}^{m-1} (k+i+1) \pmatrix{m-1 \cr i} i!
\biggl(\frac
{\alpha}{m}\biggr)^i .
\]

Routine calculations now lead to Lemma~\ref{cond_bound}
\[
E(V_2) \leq\alpha\sum_{i=0}^{\infty} (k+i+1) \alpha^i =
\alpha\biggl( \frac{k}{1-\alpha} + \frac{1}{(1-\alpha)^2} \biggr) .
\]
\upqed\end{pf}
\begin{rem}
Note that in the case where $\alpha_0 < 0.5$, the inequality $k <
m(\frac{1}{\alpha_0}-1)$ is always fulfilled.
\end{rem}

The following lemma is an extension of Lemma~\ref{cond_bound} to the
mixture mod\-el~(\ref{modelX}).
\begin{lem} \label{type1_bound}
Under assumptions (\ref{alphauf}) and (\ref{pm}), the expected number
of false rejections is bounded by
\[
E(V)
< C_2 \alpha_m m p_m ,\vadjust{\goodbreak}
\]
where $C_2$ is any constant satisfying
\[
C_2>\cases{
\displaystyle \frac{2 - \alpha_\infty}{(1 - \alpha_\infty)^2},&\quad
when $s=\infty$,\vspace*{2pt}\cr
\displaystyle \frac{e^{-s}}{s (1-\alpha_\infty)^2}+ \frac{2 - \alpha_\infty}{(1 -
\alpha_\infty)^2}, &\quad when $s\in(0,\infty)$.}
\]
\end{lem}
\begin{pf}
Define $C_6 := \frac{1}{\alpha_\infty}-1$ and $m_0 := \min(m, C_6 m)$.
The following holds:
%
\begin{equation}\label{EV_bound}
E(V) \leq\sum_{k=0}^{m_0} E(V|K=k) P(K=k) + m P(K > m_0) .
\end{equation}
The first term can be bounded for $m$ large enough using Lemma \ref
{cond_bound},
\[
\sum_{k=0}^{m_0} E(V|K=k) P(K=k)
\leq\frac{\alpha_m}{(1-\alpha_m)^2} (1-p_m)^m + \tilde c_v \alpha
_m m
p_m ,
\]
where $\tilde c_v$ is any constant larger than $\frac{2-\alpha
_{\infty
}}{(1-\alpha_{\infty})^2}$.
Now observe that $\frac{1}{(1-\alpha_m)^2} (1-p_m)^m$ converges to 0 if
$s=\infty$ or to $\frac{e^{-s}}{(1-\alpha_\infty)^2}$ otherwise.
Hence, it follows that
\[
\sum_{k=0}^{m_0} E(V|K=k) P(K=k)
< C_2 m \alpha_m p_m ,
\]
for any constant $C_2$ satisfying the assumption of Lemma~\ref{type1_bound}.

Finally, note that the second term of (\ref{EV_bound}) vanishes for
$\alpha_\infty< 0.5$. On the other hand, for $\alpha_\infty\in
[0.5,1)$, Lemma 7.1 of~\cite{A} yields
\[
mP(K>m_0)=m P(K > C_6 m) \leq m \exp\bigl(- \tfrac{1}{4} mp_m h(C_6/p_m)\bigr) ,
\]
where $h(x) = \min(|x-1|, |x-1|^2)$. If $p_m \rightarrow0$, then for
any constant $C_7\in(0,C_6)$ and sufficiently large $m$,
the right-hand side is bounded from above by $ m \exp(- C_7 m
)\rightarrow0$.
Now, from the assumptions $mp_m \rightarrow s > 0$ and $\alpha
_m\rightarrow\alpha_{\infty}>0.5$, it follows that for any constant
$\beta_2>0$ and sufficiently large $m$, the second term of (\ref
{EV_bound}) is smaller than $\beta_2 \alpha_m m p_m$, and Lemma \ref
{type1_bound} follows.
\end{pf}

Lemma~\ref{type1_bound} easily leads to the following Theorem \ref
{type1_bounb}, on the optimality of the type I error component of the
risk of BH.
\begin{thr} \label{type1_bounb}
Under assumptions (\ref{alphauf})--(\ref{pm}), the type \textup{I} error
component of the risk of BH, $R_1=\delta_0 E(V)$, satisfies $\frac
{R_1}{R_{\mathrm{opt}}} \rightarrow0$, where $R_{\mathrm{opt}}$ is the optimal risk
defined in Theorem~\ref{riskopt}.
\end{thr}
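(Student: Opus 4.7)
The starting point is Lemma \ref{type1_bound}, which delivers the bound
$$
E(V) < C_1\,\alpha_m\, m p_m
$$
for a finite constant $C_1$ depending only on $\alpha_\infty$ and $s$. Since the type~I error component of the risk of BH is $R_0 = \delta_0\, E(V) = \delta_A\,\delta\, E(V)$, this immediately gives
$$
R_0 < C_1\,\alpha_m\, m p_m\, \delta_A\, \delta.
$$
The plan is therefore to divide this bound by the asymptotic form of $R_{opt}$ provided by Theorem \ref{riskopt} and to show that the resulting ratio tends to zero.

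To control this ratio I would exploit assumption (\ref{alpha2}), which states that the BFDR control rule at level $\alpha_m$ is itself asymptotically optimal. Theorem \ref{Lu0} supplies the explicit asymptotic form of that rule's type~I error probability, namely $t_1^{BFDR} = D\, r_{\alpha_m}/f\,(1+o_t)$, where $D = 2(1-\Phi(\sqrt{C}))$. Consequently the type~I component of the BFDR rule's risk is asymptotic to $m p_m D\, r_{\alpha_m}\,\delta_A\,\delta$. Because the BFDR threshold satisfies $c_B^2 = \log v + o(\log v)$, it shares the leading order with the threshold of the Bayes oracle, so its type~II component of risk is asymptotically equal to $R_{opt}$ (the limiting type~II error probabilities coincide). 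Optimality of the BFDR rule therefore forces its type~I component to be negligible, i.e.
$$
m p_m D\, r_{\alpha_m}\,\delta_A\,\delta = o(R_{opt}).
$$

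Finally, I would use the elementary identity $\alpha_m/r_{\alpha_m} = 1-\alpha_m$, which together with $\alpha_m \to \alpha_\infty < 1$ keeps this ratio bounded away from $0$ and from $\infty$. Since $D$ is likewise bounded, the displayed relation is equivalent to $C_1\,\alpha_m\, m p_m\, \delta_A\, \delta = o(R_{opt})$, and combining this with the earlier bound on $R_0$ yields $R_0/R_{opt}\to 0$.

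There is no genuine obstacle once Lemma \ref{type1_bound} is in hand; the theorem is essentially a translation of the $E(V)$ bound into risk language, combined with the observation that the BH false-rejection count is of the same order in $\alpha_m m p_m$ as the expected number of false rejections of the corresponding BFDR control rule, the latter being negligible compared with $R_{opt}$ by the very assumption that the BFDR rule is optimal. In particular, the dichotomy between the extremely sparse case (\ref{sparse}) and the denser case (\ref{dense}) does not require separate treatment here, as Lemma \ref{type1_bound} covers both uniformly and the additional assumption (\ref{ularge}) is not invoked for this part of the argument.
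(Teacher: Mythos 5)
Your proposal is correct, and its first half is exactly the paper's: bound $R_0=\delta_0 E(V)$ via Lemma \ref{type1_bound} and compare with $R_{opt}$ from Theorem \ref{riskopt}. The routes part ways only in how the ratio is shown to vanish. The paper divides the bound by $R_{opt}$ to get $\frac{R_0}{R_{opt}}\leq C_1\frac{\alpha_m\delta_m\sqrt{u_m}}{\sqrt{\log v_m}}$ and then checks directly that conditions (\ref{w1}) and (\ref{w2}) (i.e. assumption (\ref{alpha2})) force this quantity to zero. You instead use assumption (\ref{alpha2}) as a black box: by Theorem \ref{Lu0} the BFDR rule's type I risk component is $mp_mD\,r_{\alpha_m}\delta_A\delta(1+o_t)$, its type II component is asymptotically $R_{opt}$ because its threshold is $\log v(1+o(\log v))$, and optimality then forces the type I component to be $o(R_{opt})$; since $\alpha_m\asymp r_{\alpha_m}$ (as $\alpha_\infty<1$) and $D$ is a fixed positive constant, the same holds for your bound on $R_0$. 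This buys you a proof that never re-derives an inequality from (\ref{w1})--(\ref{w2}), at the cost of one extra observation: your parenthetical ``the limiting type II error probabilities coincide'' is not by itself enough when $C=0$ (both limits are zero); what is needed, and what is true, is that the ratio of the type II error probabilities tends to one, which follows from the threshold comparison $c_B^2=\log v(1+o(1))$ exactly as in the sufficiency part of the proof of Theorem \ref{main}. With that small sharpening your argument is complete and equivalent in strength to the paper's; your closing remark that the sparse/denser dichotomy and assumption (\ref{ularge}) are not needed for the type I part also matches the paper.
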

\begin{pf}
From Lemma~\ref{type1_bound}
%
\begin{equation}\label{R1BH}
\frac{R_1}{R_{\mathrm{opt}}}=\frac{\delta_0 E(V)}{R_{\mathrm{opt}}}\leq C_3 \alpha_m
\delta_m(1+o_m) ,\vadjust{\goodbreak}
\end{equation}
where $C_3=\frac{C_2}{2\Phi(\sqrt{C})-1}$.
Now,\vspace*{1pt} observe that the left-hand side of (\ref{w2}) [included in assumption
(\ref
{alpha2})] can be written as
\[
2\log(\delta_m r_{\alpha_m})+\log u-\log\log(f/r_{\alpha_m}),
\]
and under (\ref{w1}) and Assumption~\ref{Assumption(A)} it can be
further reduced to
\[
2\log(\delta_m r_{\alpha_m})-\log C+o_m .
\]
Thus assumptions (\ref{w2}) and (\ref{alphauf}) together imply that
$\delta_m \alpha_m \rightarrow0$, and from (\ref{R1BH}) it immediately
follows that $\frac{R_1}{R_{\mathrm{opt}}}\rightarrow0$.
\end{pf}

\subsection{Bound on the type \textup{II} component of the risk}

To prove the optimality of the type II error component of the risk of
BH, we consider the extremely sparse case (\ref{sparse}) and the denser
case (\ref{dense}) separately. Note that in the extremely sparse case,
the optimality of the type II component of the risk of BH follows
directly from a comparison with the more conservative Bonferroni
correction, which according to Lemma~\ref{Bon} is ABOS in this range of
sparsity parameters.

The proof of optimality for the denser case is based on the
approximation of the random threshold of BH by the asymptotically
optimal threshold $c_{\mathrm{GW}}$ [see (\ref{GW})], given in Theorem \ref
{BHthreshold}. The corresponding ``denser'' case assumption (\ref
{pm2}) is substantially less restrictive than (\ref{dense}) and
partially covers the extremely sparse case (\ref{sparse}).
\begin{thr} \label{BH_type2_dense}
Under the assumptions of Theorem~\ref{BHthreshold} the type \textup{II} error
component of the risk of BH satisfies
%
\begin{equation}\label{t2risk}
R_2 \leq R_{\mathrm{opt}}(1+o_{m}) .
\end{equation}
\end{thr}
\begin{pf}
Denote the number of false negatives under the BH rule by $T$. Let us
fix $\varepsilon>0$ and let $\tilde c_1=c_{\mathrm{GW}}+\varepsilon$. Clearly,
\[
E(T)\leq E(T| c_{\mathrm{BH}}\leq\tilde c_1)P(c_{\mathrm{BH}}\leq\tilde c_1)+ m
P(c_{\mathrm{BH}} > \tilde c_1) ,
\]
and furthermore
\[
E(T| c_{\mathrm{BH}}\leq\tilde c_1)P(c_{\mathrm{BH}}\leq\tilde c_1)\leq E T_1 ,
\]
where $T_1$ is the number of false negatives produced by the rule based
on the threshold $\tilde c_1$.
Note that the rule based on $\tilde c_1$ differs from the
asymptotically optimal rule $c_{\mathrm{GW}}$ only by a constant, and therefore,
from Theorem~\ref{main}, it is asymptotically optimal. Hence, it
follows that $\delta_A E T_1=R_{\mathrm{opt}}(1+o_{m})$. On the other hand, from
Theorem~\ref{BHthreshold}, for any $\beta_1 > 0$ and sufficiently large
$m$ (dependent on $\varepsilon$ and $\beta_1$)
\[
P(c_{\mathrm{BH}} > \tilde c_1) \leq m^{-\beta_1} .
\]
Therefore,
\[
R_2=\delta_A E T \leq R_{\mathrm{opt}}(1+o_m)+\delta_A m^{1-\beta_1} .\vadjust{\goodbreak}
\]
Now, observe that under assumption (\ref{pm2})
\[
\frac{\delta_A m^{1-\beta_1}}{R_{\mathrm{opt}}}=C_4\frac{m^{-\beta
_1}}{p}<C_4\frac{m^{1-\beta_1}}{\log^{\beta_p}m} ,
\]
where $C_4=\frac{1}{2\Phi(\sqrt{C})-1}$. Thus, choosing, for example,
$\beta_1 = 1$, we conclude that $\delta_A m^{1-\beta_1}=o(R_{\mathrm{opt}})$,
and the proof is thus complete.
\end{pf}

\section*{Acknowledgments}
We want to thank two anonymous referees and the Associate Editor for
many constructive suggestions to improve this manuscript. We would also
like to express our gratitude to David Ramsey for careful reading of
the manuscript and helpful comments and to Krzysztof Bogdan for several
helpful suggestions.

\begin{supplement}
\stitle{Supplement to ``Asymptotic Bayes-optimality under sparsity of
some multiple testing procedures''}
\slink[doi]{10.1214/10-AOS869SUPP} 
\sdatatype{.pdf}
\sfilename{aos869\_suppl.pdf}
\sdescription{Analysis of behavior of BFDR for scale mixtures of normal distributions
and proofs of Theorems~\ref{main},~\ref{Lu0} and~\ref{BHthreshold}.}
\end{supplement}


%
\printaddresses

\end{document}